\newtheorem{theorem}{Theorem}[section]
\newtheorem{remark}[theorem]{Remark}
\DeclareMathAlphabet{\mathpzc}{OT1}{pzc}{m}{it}
\def\to{\rightarrow}
\def\ot{\otimes}
\def\f{\mathfrak}
\def\c{\mathcal}
\def\r{\mathrm}
\def\bb{\mathbb}
\def\s{\mathscr}
\def\ov{\overline}
\def\sl{\langle}
\def\sr{\rangle}
\date{}
\begin{document}
%%%%%%%%%%%%%%%%%%%%%%%%%%%%%%%%%%%%%%%%%%%%%%%%%%%%%%%%%%%%%%%%%%%%%%%%%%%%%%%%%%%%%%%%%%%%%%%%%%%%%%%%%%%%%%%%%%%%%%%%%%
\title{On the Differential Geometry of Some Classes of\\ Infinite Dimensional Manifolds}
%%%%%%%%%%%%%%%%%%%%%%%%%%%%%%%%%%%%%%%%%%%%%%%%%%%%%%%%%%%%%%%%%%%%%%%%%%%%%%%%%%%%%%%%%%%%%%%%%%%%%%%%%%%%%%%%%%%%%%%%%%
\author{Maysam Maysami Sadr\thanks{sadr@iasbs.ac.ir, corresponding author, orcid.org/0000-0003-0747-4180}
\\\vspace{2mm}\&\vspace{.5mm}\\Danial Bouzarjomehri Amnieh\thanks{danial.bouzarj@iasbs.ac.ir}}
\affil{Department of Mathematics,\\ Institute for Advanced Studies in Basic Sciences,\\Zanjan, Iran }
%%%%%%%%%%%%%%%%%%%%%%%%%%%%%%%%%%%%%%%%%%%%%%%%%%%%%%%%%%%%%%%%%%%%%%%%%%%%%%%%%%%%%%%%%%%%%%%%%%%%%%%%%%%%%%%%%%%%%%%%%%
\maketitle
%%%%%%%%%%%%%%%%%%%%%%%%%%%%%%%%%%%%%%%%%%%%%%%%%%%%%%%%%%%%%%%%%%%%%%%%%%%%%%%%%%%%%%%%%%%%%%%%%%%%%%%%%%%%%%%%%%%%%%%%%%
\begin{abstract}
Albeverio, Kondratiev, and R\"{o}ckner have introduced a type of differential geometry, which we call lifted geometry, for the configuration space
$\Gamma_X$ of any manifold $X$. The name comes from the fact that various elements of the geometry of $\Gamma_X$ are constructed via lifting of
the corresponding elements of the geometry of $X$. In this note, we construct a general algebraic framework for lifted geometry which
can be applied to various ``infinite dimensional spaces'' associated to $X$.
In order to define a lifted geometry for a ``space'', one dose not need any topology or local coordinate system on the space.
As example and application, lifted geometry for spaces of Radon measures on $X$, mappings into $X$, embedded submanifolds of $X$,
and tilings on $X$, are considered. The gradient operator in the lifted geometry of Radon measures is considered.
Also, the construction of a natural Dirichlet form associated to a Random measure is discussed.
It is shown that Stokes' Theorem appears as ``differentiability'' of ``boundary operator'' in the lifted geometry of spaces of submanifolds.
It is shown that (generalized) action functionals associated with Lagrangian densities on $X$
form the algebra of smooth functions in a specific lifted geometry for the path-space of $X$.

%46T05 Infinite-dimensional manifolds 46Txx Nonlinear functional analysis 46T10 Manifolds of mappings
%58Dxx Spaces and manifolds of mappings 58D10 Spaces of embeddings and immersions 58D15 Manifolds of mappings
%58Bxx Infinite-dimensional manifolds 58B99 None of the above, but in this section
%58B10 Differentiability questions for infinite-dimensional manifolds
\textbf{MSC 2020.} 58B99, 58D10, 58D15, 58B10, 46T05.

\textbf{Keywords.} Algebraic differential geometry, infinite dimensional manifold, smooth function, vector field, differential form.
\end{abstract}
%%%%%%%%%%%%%%%%%%%%%%%%%%%%%%%%%%%%%%%%%%%%%%%%%%%%%%%%%%%%%%%%%%%%%%%%%%%%%%%%%%%%%%%%%%%%%%%%%%%%%%%%%%%%%%%%%%%%%%%%%%
%%%%%%%%%%%%%%%%%%%%%%%%%%%%%%%%%%%%%%%%%%%%%%%%%%%%%%%%%%%%%%%%%%%%%%%%%%%%%%%%%%%%%%%%%%%%%%%%%%%%%%%%%%%%%%%%%%%%%%%%%%
%%%%%%%%%%%%%%%%%%%%%%%%%%%%%%%%%%%%%%%%%%%%%%%%%%%%%%%%%%%%%%%%%%%%%%%%%%%%%%%%%%%%%%%%%%%%%%%%%%%%%%%%%%%%%%%%%%%%%%%%%%
%%%%%%%%%%%%%%%%%%%%%%%%%%%%%%%%%%%%%%%%%%%%%%%%%%%%%%%%%%%%%%%%%%%%%%%%%%%%%%%%%%%%%%%%%%%%%%%%%%%%%%%%%%%%%%%%%%%%%%%%%%
%%%%%%%%%%%%%%%%%%%%%%%%%%%%%%%%%%%%%%%%%%%%%%%%%%%%%%%%%%%%%%%%%%%%%%%%%%%%%%%%%%%%%%%%%%%%%%%%%%%%%%%%%%%%%%%%%%%%%%%%%%
%%%%%%%%%%%%%%%%%%%%%%%%%%%%%%%%%%%%%%%%%%%%%%%%%%%%%%%%%%%%%%%%%%%%%%%%%%%%%%%%%%%%%%%%%%%%%%%%%%%%%%%%%%%%%%%%%%%%%%%%%%
\section{Introduction}\label{2110300801}
Albeverio, Kondratiev, and R\"{o}ckner in \cite{Albeverio0,Albeverio1,Albeverio2} defined a type of differential geometry for the configuration
space $\Gamma_X$ of a smooth manifold $X$. ($\Gamma_X$ is the set of all locally finite subsets of $X$
and may be identified with the set of all Radon measures on $X$ of the form $\Sigma_{x\in S}\delta_x$ where $S$ is a countable subset
of $X$ without any limit point.) Their main idea was to construct vector fields, differential forms, metrics, and other basic objects
of the geometry of $\Gamma_X$, via lifting, in a certain meaning, of the corresponding objects on $X$. (Note that $\Gamma_X$ is not modeled on a
single topological linear space and hence the ordinary differential geometry of (infinite dimensional) manifolds (\cite{Lang1,Michor1})
can not be applied to it.) See \cite{Albeverio3,Albeverio4,Finkelshtein2,Finkelshtein1,Kondratiev1,Kuchling1,Ma1,Privault1,Rockner1}
for (some what) the same idea and its applications.

The main goal of this note is to introduce, in an axiomatic and algebraic way, a type of differential geometry called
\emph{lifted geometry} that generalizes the mentioned geometry of $\Gamma_X$ to a rather large class of spaces and infinite dimensional manifolds
associated with $X$.

In $\S$\ref{2110300802} we describe basic elements of a type of differential geometry in an abstract algebraic framework.
The similar geometries have been considered by many authors, see for instance \cite{Dubois-Violette1} and \cite{Garcia-Bondia1}.
In $\S$\ref{2110300802-1} we recall some algebraic preliminaries. In $\S$\ref{2111120808} we define a \emph{geometry} to be a pair $(A,D)$
where $A$ is a commutative real-algebra and $D$ is a Lie-algebra of derivations on $A$. The new aspect of our geometry (rather than the similar
concepts introduced by others) is that it is developed with respect to an arbitrary Lie-algebra $D$ of derivations instead the Lie-algebra of all derivations. (As we will see, this key property enables us to apply effectively the geometry for our favorite (infinite dimensional) manifolds.)
In $\S$\ref{2110300802-3} and $\S$\ref{2110300802-4} we define respectively differential forms and de Rham cohomology in our geometry.
In $\S$\ref{2111180700} a \emph{geometry on a set $\s{S}$} is defined to be a geometry $(A,D)$ such that $A$ is an algebra of real functions
on $\s{S}$. Then the concepts of tangents space, vector fields, geometrization of differential forms, and (weakly) differentiable mappings
are considered.

In $\S$\ref{2110300803} we introduce a general framework for lifted geometry. Roughly speaking, let $\s{S}$ be a set of objects \emph{associated
appropriately} with a smooth manifold $X$ such that any (complete) vector field on $X$ induces in a \emph{natural way} a flow on $\s{S}$. Then
a lifted geometry for $\s{S}$ is a geometry $(A,D)$ on the set $\s{S}$ such that $A$ is an algebra of functions on $\s{S}$ obtained via a \emph{distinguished} lifting procedure of smooth functions (or differential forms) on $X$, and such that derivations in $D$ are also constructed
by lifting of vector fields of $X$ in a canonical way. The algebra $A$ is interpreted as the algebra of smooth
functions on $\s{S}$ and $D$ as the Lie-algebra of smooth vector fields on $\s{S}$. Significance of any lifted geometry for $\s{S}$ is that it is
constructed without any using of local chart or even topology on $\s{S}$.

In the rest sections we consider various examples of lifted geometry and its applications:
In $\S$\ref{2110300804} we extend some contents of \cite{Albeverio1} and construct a lifted geometry for the space $\s{M}_X$
of Radon measures on $X$, and its suitable subsets. In $\S$\ref{2110300804.5} we consider construction of gradient operator in the
lifted geometry of $\s{M}_X$ where $X$ is a Riemannian manifold. Also there is a little discussion about the corresponding Dirichlet form
associated with a random measure.
In $\S$\ref{2110300805} we consider a lifted geometry for the set $\s{F}_X^Y$ (and its suitable subsets)
of measurable mappings from a measurable space $Y$ into $X$. In $\S$\ref{2110300806} we construct
a lifted geometry for the set $\s{E}_X^k$ of $k$-dimensional embedded submanifolds of $X$. Also we show that in our framework stokes' theorem
may be interpreted as \emph{differentiability} of the boundary operator $\partial:\s{E}_X^k\to\s{E}_X^{k-1}$.
In $\S$\ref{2110300807} we construct a lifted geometry for the set $\s{T}_X$ of \emph{tilings} of $X$. In $\S$\ref{2110300808}
we consider a lifted geometry for the set $\s{C}_X$ of smooth curves in $X$ such that its algebra
is defined to be the algebra of generalized action functionals associated to Lagrangian densities on $X$.

Although, in $\S\S$\ref{2110300804}-\ref{2110300808}, in each case we describe only one type of lifted geometry
but the reader will recognize that our methods can be appropriately modified to produce various lifted geometries for the mentioned spaces.

\textbf{Notations.} For a smooth manifold $X$ we denote by $\r{C}^\infty(X)$ the algebra of smooth real-valued functions on $X$. $\Omega^n(X)$
and $\r{Vec}(X)$ respectively denote the $\r{C}^\infty(X)$-module of $n$-differential forms and  the Lie-algebra of vector fields on $X$. The
subset of functions with compact support is denoted by $\r{C}^\infty_\r{c}(X)$. Similarly, $\Omega^n_\r{c}(X)$ and $\r{Vec}_\r{c}(X)$
denote the subsets of forms and vector fields with compact support. The Lie-derivative $\r{w.r.t.}$ $v\in\r{Vec}(X)$ is denoted by $\r{d}_v$.
The exterior-derivative is denoted by $\r{d}$.
%%%%%%%%%%%%%%%%%%%%%%%%%%%%%%%%%%%%%%%%%%%%%%%%%%%%%%%%%%%%%%%%%%%%%%%%%%%%%%%%%%%%%%%%%%%%%%%%%%%%%%%%%%%%%%%%%%%%%%%%%%
%%%%%%%%%%%%%%%%%%%%%%%%%%%%%%%%%%%%%%%%%%%%%%%%%%%%%%%%%%%%%%%%%%%%%%%%%%%%%%%%%%%%%%%%%%%%%%%%%%%%%%%%%%%%%%%%%%%%%%%%%%
%%%%%%%%%%%%%%%%%%%%%%%%%%%%%%%%%%%%%%%%%%%%%%%%%%%%%%%%%%%%%%%%%%%%%%%%%%%%%%%%%%%%%%%%%%%%%%%%%%%%%%%%%%%%%%%%%%%%%%%%%%
%%%%%%%%%%%%%%%%%%%%%%%%%%%%%%%%%%%%%%%%%%%%%%%%%%%%%%%%%%%%%%%%%%%%%%%%%%%%%%%%%%%%%%%%%%%%%%%%%%%%%%%%%%%%%%%%%%%%%%%%%%
\section{A Differential Geometry for Commutative Algebras}\label{2110300802}
%%%%%%%%%%%%%%%%%%%%%%%%%%%%%%%%%%%%%%%%%%%%%%%%%%%%%%%%%%%%%%%%%%%%%%%%%%%%%%%%%%%%%%%%%%%%%%%%%%%%%%%%%%%%%%%%%%%%%%%%%%
\subsection{Preliminaries}\label{2110300802-1}
Throughout all vector spaces and algebras are over the real field $\bb{R}$.
Algebras have unit and modules are unital. Algebra morphisms preserve the units.
For vector spaces $V,W$, the vector space of linear mappings from $V$ into $W$ is denoted by $\r{Lin}(V,W)$. Composition of linear
mappings makes $\r{Lin}(V):=\r{Lin}(V,V)$ into an algebra. We consider $\r{Lin}(V)$ also as a Lie-algebra with the canonical
bracket $[a,a']:=aa'-a'a$ ($a,a'\in\r{Lin}(V)$).
Let $A$ be a commutative algebra. For $A$-modules $M,N$ the set of $A$-module morphisms from $N$ into $M$ is denoted by $\r{Mod}(N,M)$.
The vector space $\r{Lin}(V,M)$ is a $A$-module with the module operation induced by that of $M$ in the obvious way.
The vector space $\r{Mod}(N,M)$ is considered as a sub-$A$-module of $\r{Lin}(N,M)$. We let $\r{Mod}^0(N,M):=M$ and
$\r{Mod}^1(N,M):=\r{Mod}(N,M)$. We also let $\r{Mod}^n(N,M):=\r{Mod}(N^{\ot_A^n},M)$ ($n\geq2$) and consider it as a sub-$A$-module
of $\r{Lin}(N^{\ot_A^n},M)$. (Here $\ot_A$ denotes the tensor product of $A$-modules.) We denote by $\Lambda^n(N,M)\subset\r{Mod}^n(N,M)$
the sub-module of alternating morphisms i.e. the morphisms $\omega\in\r{Mod}^n(N,M)$ satisfying
$$\omega(x_{\sigma(1)}\ot \cdots\ot x_{\sigma(n)})=\r{sgn}(\sigma)\omega(x_1\ot\cdots\ot x_n).$$
We let $\r{Alt}:\r{Mod}^n(N,M)\to\Lambda^n(N,M)$ be the $A$-module morphism defined by
$$\r{Alt}(f)(x_1\ot\cdots\ot x_n):=\frac{1}{n!}\sum_\sigma\r{sgn}(\sigma) f(x_{\sigma(1)}\ot \cdots\ot x_{\sigma(n)}).$$
Thus $\r{Alt}$ is a left inverse for the inclusion $\Lambda^n(N,M)\hookrightarrow\r{Mod}^n(N,M)$.
We have the exterior-algebra $\Lambda^*(N,A):=\oplus_{n=0}^\infty\Lambda^n(N,A)$ with the wedge product
$$\omega\wedge\eta:=\frac{(k+l)!}{k!l!}\r{Alt}(\omega\ot\eta)\hspace{10mm}(\omega\in\Lambda^k(N,A),\eta\in\Lambda^l(N,A)).$$
A derivation $d:A\to M$ is a linear map satisfying $d(ab)=d(a)b+ad(b)$ for $a,b\in A$. The set of all derivations from $A$ to $M$ is denoted
by $\r{Der}(A,M)$. This may be considered as a sub-$A$-module of $\r{Lin}(A,M)$. Also note that $\r{Der}(A):=\r{Der}(A,A)$ is a sub-Lie-algebra
of $\r{Lin}(A)$. For any graded-algebra $B=\oplus_{n=0}^\infty B_n$ a graded-derivation of degree $k\in\bb{Z}$ is a homogenous linear mapping
$d:B\to B$ of degree $k$ (i.e. $d(B_n)\subseteq B_{n+k}$) satisfying $d(ab)=d(a)b+(-1)^{kn}ad(b)$ for $a\in B_n$ and $b\in B$.
A differential graded-algebra is a pair $(B,d)$ where $B$ is a graded-algebra and $d$ is a graded-derivation on $B$ of degree
$1$ satisfying $d^2=0$. Then $\r{Ker}(d)$ is a subalgebra of $B$ and $\r{Img}(d)$ is an ideal in $\r{Ker}(d)$. The graded-algebra
$\r{Ker}(d)/\r{Img}(d)$ is called cohomology-algebra of $(B,d)$. Also the vector space $(\r{Ker}(d)\cap B_n)/(\r{Img}(d)\cap B_n)$ is
called $n$'th cohomology group of $(B,d)$. A graded-algebra $B$ is called graded-commutative if $ab=(-1)^{nm}ba$ for $a\in B_n$ and $b\in B_m$.
(Thus $\Lambda^*(N,A)$ is graded-commutative.)
%%%%%%%%%%%%%%%%%%%%%%%%%%%%%%%%%%%%%%%%%%%%%%%%%%%%%%%%%%%%%%%%%%%%%%%%%%%%%%%%%%%%%%%%%%%%%%%%%%%%%%%%%%%%%%%%%%%%%%%%%%
\subsection{Algebraic differential geometries}\label{2111120808}
By a \emph{geometry} we mean a pair $\f{G}=(A,D)$ where $A$ is a commutative algebra and $D$ is
a sub-Lie-algebra of $\r{Der}(A)$ (not necessarily sub-$A$-module). For any geometry $\f{G}=(A,D)$ we let $\ov{D}$ denote the sub-$A$-module
of $\r{Der}(A)$ generated by $D$. Note that $\ov{D}$ is also a sub-Lie-algebra of $\r{Der}(A)$ and any $\beta\in\ov{D}$ is of the form
\begin{equation}\label{2111232048}
\beta=\sum_{i=1}^na_i\alpha_i\hspace{10mm}(a_i\in A,\alpha_i\in D).\end{equation} A \emph{morphism} $\phi:\f{G}'\to\f{G}$
between geometries is given by an algebra morphism $\phi:A\to A'$ such that for every $\alpha'\in\ov{D'}$ there exists $\alpha\in\ov{D}$
with the property
\begin{equation}\label{2112090807}\alpha'(\phi(a))=\phi(\alpha(a))\hspace{10mm}(a\in A).\end{equation}
A \emph{weak morphism} $\phi:\f{G}'\to\f{G}$ is an algebra morphism $\phi:A\to A'$ such that for every $\alpha'\in D'$ there is $\alpha\in D$
satisfying (\ref{2112090807}). Note that any weak morphism $\f{G}'\to\f{G}$ which is surjective as the algebra morphism $A\to A'$, is a
morphism. It is easily seen that compositions of (weak) morphisms between geometries are (weak) morphisms. Thus we have the category of geometries
with (weak) morphisms.

To any smooth manifold $X$ we may associate the
\emph{classical geometry} $$\f{X}=(\r{C}^\infty(X),\r{Vec}(X))$$ where by the abuse of notations $\r{Vec}(X)$ is also denotes the set of all
directional-derivatives $\r{d}_v:\r{C}^\infty(X)\to\r{C}^\infty(X)$ for $v\in\r{Vec}(X)$. (In the case that $X$ is not compact we have also the
another geometry $(\r{C}^\infty(X),\r{Vec}_\r{c}(X))$ associated to $X$.) Note that we have $\ov{\r{Vec}(X)}=\r{Vec}(X)$. If $f:X'\to X$ is a proper
embedding of smooth manifold $X'$ into $X$ then it follows from \cite[Problem 8-15]{Lee1} that the algebra morphism
$$\tilde{f}:\r{C}^\infty(X)\to\r{C}^\infty(X')\hspace{10mm}a\mapsto a\circ f$$ defines a morphism $\f{X}'\to\f{X}$.
Thus the category of smooth manifolds and proper embeddings may be regarded as a subcategory of the category of geometries.
%%%%%%%%%%%%%%%%%%%%%%%%%%%%%%%%%%%%%%%%%%%%%%%%%%%%%%%%%%%%%%%%%%%%%%%%%%%%%%%%%%%%%%%%%%%%%%%%%%%%%%%%%%%%%%%%%%%%%%%%%%
\subsection{Differential forms}\label{2110300802-3}
For any geometry $\f{G}=(A,D)$ let
$$\r{d}:A\to\Lambda^1(\ov{D},A)=\r{Mod}(\ov{D},A)\hspace{5mm}(\r{d}a)(\alpha):=\alpha(a)\hspace{10mm}(a\in A,\alpha\in\ov{D}).$$
Then $\r{d}$ is a derivation called \emph{exterior-derivative}. Consider the exterior-algebra $\Lambda^*(\ov{D},A)$ and let $\Omega^*(\f{G})$
denote the subalgebra of $\Lambda^*(\ov{D},A)$ generated by $A$ and the image of $\r{d}$. Let
$$\Omega^n(\f{G}):=\Omega^*(\f{G})\cap\Lambda^n(\ov{D},A)\hspace{10mm}(n\geq0).$$ Then $\Omega^*(\f{G})=\oplus_{n=0}^\infty\Omega^n(\f{G})$
is a graded-commutative graded-algebra called the \emph{exterior-algebra} of $\f{G}$. Any element of $\Omega^n(\f{G})$ is called a
\emph{(differential) $n$-form} for $\f{G}$. Note that $\Omega^0(\f{G})=A$ and any $\omega\in\Omega^n(\f{G})$ is of the form
\begin{equation}\label{2111232101}
\omega=\sum_{j=1}^ka_0^j(\r{d}a_1^j)\wedge\cdots\wedge(\r{d}a_n^j)\hspace{10mm}(a_i^j\in A).\end{equation}
\begin{theorem}\emph{$\r{d}:\Omega^0(\f{G})\to\Omega^1(\f{G})$ extends uniquely to a graded-derivation
$$\r{d}:\Omega^*(\f{G})\to\Omega^*(\f{G})$$ of degree $1$ satisfying $\r{d}\r{d}=0$. Thus the pair $(\Omega^*(\f{G}),\r{d})$
is a differential graded-algebra.}\end{theorem}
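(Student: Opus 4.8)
The plan is to build the extension $\r{d}:\Omega^*(\f{G})\to\Omega^*(\f{G})$ by the usual explicit formula on the generators given in \eqref{2111232101} and then check it is well-defined, a graded-derivation of degree $1$, and squares to zero. First I would define, for a form $\omega=\sum_j a_0^j(\r{d}a_1^j)\wedge\cdots\wedge(\r{d}a_n^j)$ as in \eqref{2111232101}, the candidate
$$\r{d}\omega:=\sum_j(\r{d}a_0^j)\wedge(\r{d}a_1^j)\wedge\cdots\wedge(\r{d}a_n^j).$$
The graded-Leibniz rule of degree $1$ and $\r{d}\r{d}=0$ would both be immediate from this formula once well-definedness is known, since on $\Omega^0=A$ the map $\r{d}$ is already the given derivation and the formula inserts exactly one extra exact factor in front; uniqueness is then forced because any degree-$1$ graded-derivation agreeing with $\r{d}$ on $A$ must send $a_0\,\r{d}a_1\wedge\cdots\wedge\r{d}a_n$ to $\r{d}a_0\wedge\r{d}a_1\wedge\cdots\wedge\r{d}a_n$ (the $\r{d}$ of each $\r{d}a_i$ contributes nothing, since applying the graded-derivation property twice and using $\r{d}|_A$ shows $\r{d}(\r{d}a)$ must be killed — more precisely, one first checks $\r{d}(\r{d}a)=0$ on $1$-forms from the derivation property applied to $\r{d}(a\cdot 1)$, or simply observes the formula's output is forced on a spanning set).

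The real work, and the step I expect to be the main obstacle, is well-definedness: an element of $\Omega^n(\f{G})$ can be written in many ways as a sum $\sum_j a_0^j\,\r{d}a_1^j\wedge\cdots\wedge\r{d}a_n^j$, and I must show $\sum_j \r{d}a_0^j\wedge\r{d}a_1^j\wedge\cdots\wedge\r{d}a_n^j$ is independent of the representation. The clean way to handle this is \emph{not} to argue directly about relations among wedge products, but to produce $\r{d}$ abstractly. The standard trick: form the free graded-commutative differential graded-algebra and map it onto $\Omega^*(\f{G})$. Concretely, let $\Omega_{\r{u}}$ be the universal construction — take the tensor algebra on two copies of $A$ (one in degree $0$, one, written $\r{d}A$, in degree $1$), impose graded-commutativity and the Leibniz relations $\r{d}(ab)=(\r{d}a)b+a(\r{d}b)$ and $\bb{R}$-linearity of $a\mapsto\r{d}a$, equip it with the obvious universal differential $\r{d}_{\r{u}}$ of degree $1$ with $\r{d}_{\r{u}}\r{d}_{\r{u}}=0$ (this exists on the universal object by construction), and note there is a surjective graded-algebra morphism $\pi:\Omega_{\r{u}}\to\Omega^*(\f{G})$ sending $a\mapsto a$ and $\r{d}a\mapsto\r{d}a$. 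Then it suffices to show $\pi$ descends: i.e. $\ker\pi$ is a $\r{d}_{\r{u}}$-stable ideal, and $\r{d}$ is the induced map on the quotient. Since $\ker\pi$ is a graded ideal generated (as a differential ideal issue aside) by elements in its degree-$\le 1$ part together with relations among the wedge products, the key reduces to: if $\sum_j a_0^j\,\r{d}a_1^j\wedge\cdots\wedge\r{d}a_n^j=0$ in $\Lambda^n(\ov D,A)$, then $\sum_j \r{d}a_0^j\wedge\cdots\wedge\r{d}a_n^j=0$ there too.

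To prove that last implication I would use the pairing with $\ov D$: a form in $\Lambda^n(\ov D,A)$ is zero iff it evaluates to zero on every $n$-tuple $(\alpha_1,\dots,\alpha_n)$ with $\alpha_i\in\ov D$. Evaluating $\sum_j a_0^j\,\r{d}a_1^j\wedge\cdots\wedge\r{d}a_n^j$ on $(\alpha_1,\dots,\alpha_n)$ gives $\sum_j a_0^j\det\big(\alpha_i(a_k^j)\big)_{i,k}$, while evaluating $\sum_j \r{d}a_0^j\wedge\cdots\wedge\r{d}a_n^j$ on $(\alpha_0,\alpha_1,\dots,\alpha_n)$ gives $\sum_j\det\big(\alpha_i(a_k^j)\big)_{0\le i,k\le n}$. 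Expanding the latter determinant along the first column and exploiting that $\alpha_0$ ranges over all of $\ov D$ — in particular over $a\alpha$ for arbitrary $a\in A$, $\alpha\in D$, so that $\alpha_0(a_0^j)$ can be ``separated'' — one reduces the vanishing of the $(n+1)$-form to the hypothesis that the $n$-form vanishes, together with the Leibniz identity $\alpha_0(a_0^j)$ paired against $a_0^j\alpha_0(\cdot)$. (This is exactly the algebraic content of ``$\r{d}$ is well-defined''; it is the one place where the specific structure of derivations, not just abstract graded algebra, is used, and where I anticipate the bookkeeping being heaviest.) Once well-definedness is in hand, graded-Leibniz follows by writing a product of two forms in the form \eqref{2111232101}, applying the formula, and matching terms; $\r{d}\r{d}=0$ follows because $\r{d}\r{d}\omega$ has every summand beginning with $\r{d}\r{d}a_0^j$, and $\r{d}(\r{d}a)$ evaluated on $(\alpha_0,\alpha_1)$ is $\alpha_0(\alpha_1(a))-\alpha_1(\alpha_0(a))-[\alpha_0,\alpha_1](a)=0$ since $\ov D$ is a Lie subalgebra of $\r{Der}(A)$ acting by commutator bracket — so $\r{d}\r{d}a=0$ already in $\Omega^2(\f{G})$, and the Leibniz rule propagates this to all forms.
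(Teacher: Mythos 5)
Your proposal follows essentially the same route as the paper: define $\r{d}$ on representatives $\sum_j a_0^j\,\r{d}a_1^j\wedge\cdots\wedge(\r{d}a_n^j)$ by inserting $\r{d}a_0^j$, and reduce everything to the single well-definedness lemma that $\sum_j a_0^j(\r{d}a_1^j)\wedge\cdots\wedge(\r{d}a_n^j)=0$ forces $\sum_j (\r{d}a_0^j)\wedge\cdots\wedge(\r{d}a_n^j)=0$ (the universal-DGA wrapping adds nothing beyond this, as you yourself note). One caveat on your sketch of that lemma: the ``separation'' of $\alpha_0(a_0^j)$ by letting $\alpha_0$ range over $a\alpha$ is not the mechanism that works; the paper instead applies $\alpha_0\in\ov{D}$ to the scalar identity $\sum_j a_0^j\,\omega_j(\beta_1\ot\cdots\ot\beta_n)=0$ via the Leibniz rule and antisymmetrizes over $(\alpha_0,\beta_1,\ldots,\beta_n)$, whereupon the second-order terms $\alpha_0\beta_i$ recombine into $[\alpha_0,\beta_i]\in\ov{D}$ and are killed by the hypothesis --- precisely the Lie-subalgebra argument you correctly deploy later for $\r{d}\r{d}=0$, and the one place the hypothesis that $\ov{D}$ is closed under the bracket is genuinely needed.
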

\begin{proof}Suppose that for $a,b\in A$ we have $a(\r{d}b)=0$. Thus $a\gamma(b)=0$ for every $\gamma\in\ov{D}$.
For every $\alpha,\beta\in\ov{D}$ it follows from $\alpha(a\beta(b))=0$ that $\alpha(a)\beta(b)+a(\alpha\beta)(b)=0$. Similarly
$\beta(a)\alpha(b)+a(\beta\alpha)(b)=0$. On the other hand we have $a([\alpha,\beta](b))=0$ (since $[\alpha,\beta]\in\ov{D}$). Thus
$\alpha(a)\beta(b)-\beta(a)\alpha(b)=0$. This means that $(\r{d}a)\wedge(\r{d}b)=0$. Thus from $a(\r{d}b)=0$ we have concluded that
$(\r{d}a)\wedge(\r{d}b)=0$. Similarly it can be shown that if for a family $\{a_i^j\}_{i=0,\ldots,n}^{j=1,\ldots,k}$
of elements of $A$ we have $\sum_{j=1}^ka_0^j(\r{d}a_1^j)\wedge\cdots\wedge(\r{d}a_n^j)=0$ then
$\sum_{j=1}^k(\r{d}a_0^j)\wedge(\r{d}a_1^j)\wedge\cdots\wedge(\r{d}a_n^j)=0$. Thus the assignment
$$a_0(\r{d}a_1)\wedge\cdots\wedge(\r{d}a_n)\mapsto(\r{d}a_0)\wedge(\r{d}a_1)\wedge\cdots\wedge(\r{d}a_n)$$ defines a linear map
$\r{d}:\Omega^n(\f{G})\to\Omega^{n+1}(\f{G})$. The desired properties of $\r{d}$ can be checked.\end{proof}
For the classical geometry $\f{X}$ the objects $\r{d}$ and
$\Omega^n(\f{X})$ coincide with the classical exterior-derivative and the module $\Omega^n(X)$ of differential $n$-forms on $X$. In the following
theorem we see that the classical concepts of Lie-derivative and interior-product associated to a vector field can be stated for geometries.
The proof is similar to the classical case and omitted.
\begin{theorem}\emph{For every $\alpha\in\ov{D}$ there exists a unique graded-derivation $$\r{d}_\alpha:\Omega^*(\f{G})\to\Omega^*(\f{G})$$
of degree zero such that commutes with $\r{d}$ and such that $\r{d}_\alpha(a)=\alpha(a)$ for every $a\in A$. There exists also a unique
graded-derivation $$\f{i}_\alpha:\Omega^*(\f{G})\to\Omega^*(\f{G})$$  of degree $-1$ such that for every $a\in A$, $\f{i}_\alpha(a)=0$
and $\f{i}_\alpha(\r{d}(a))=\alpha(a)$. Moreover, for any two derivations $\alpha,\beta\in\ov{D}$ we have :
$$\r{d}_\alpha=\f{i}_\alpha\circ\r{d}+\r{d}\circ\f{i}_\alpha=(\f{i}_\alpha+\r{d})^2\hspace{5mm}\text{and}
\hspace{5mm}\f{i}_{[\alpha,\beta]}=\r{d}_\alpha\circ\f{i}_\beta-\f{i}_\beta\circ\r{d}_\alpha.$$}\end{theorem}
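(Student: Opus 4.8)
The plan is to construct the interior product $\f{i}_\alpha$ first and then obtain the Lie derivative $\r{d}_\alpha$ from it by Cartan's formula. For $\alpha\in\ov{D}$ I would define a contraction operator on the whole ambient exterior algebra $\Lambda^*(\ov{D},A)$ by $\f{i}_\alpha|_A:=0$ and $(\f{i}_\alpha\omega)(\alpha_1\ot\cdots\ot\alpha_{n-1}):=\omega(\alpha\ot\alpha_1\ot\cdots\ot\alpha_{n-1})$ for $\omega\in\Lambda^n(\ov{D},A)$, $n\geq1$; one checks readily that $\f{i}_\alpha\omega$ is again $A$-multilinear and alternating, so $\f{i}_\alpha:\Lambda^n(\ov{D},A)\to\Lambda^{n-1}(\ov{D},A)$, and that with the normalization of $\wedge$ fixed in $\S$\ref{2110300802-1} it is a graded-derivation of degree $-1$ --- this is the classical antiderivation property of contraction. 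Working on the ambient algebra is deliberate: $\Lambda^*(\ov{D},A)$ has no relations among its elements, so the operator exists there with no well-definedness question. Then $\f{i}_\alpha$ restricts to $\Omega^*(\f{G})$, because a graded-derivation maps the subalgebra generated by a set $S$ into the subalgebra generated by $S\cup\f{i}_\alpha(S)$, and here the generating set $S=A\cup\r{d}(A)$ is sent into $\{0\}\cup\alpha(A)\subseteq A\subseteq\Omega^*(\f{G})$. Uniqueness of $\f{i}_\alpha$ is then immediate: two graded-derivations of the same degree agreeing on a generating set have difference a graded-derivation vanishing on that set, hence vanishing everywhere, and the prescription $\f{i}_\alpha(a)=0$, $\f{i}_\alpha(\r{d}a)=\alpha(a)$ pins down the values on $S$.

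For the Lie derivative I would set $\r{d}_\alpha:=\f{i}_\alpha\circ\r{d}+\r{d}\circ\f{i}_\alpha$ on $\Omega^*(\f{G})$, which already establishes the identity $\r{d}_\alpha=\f{i}_\alpha\circ\r{d}+\r{d}\circ\f{i}_\alpha$. Since $\r{d}$ (from the preceding theorem) and $\f{i}_\alpha$ both preserve $\Omega^*(\f{G})$ and have degrees $+1$ and $-1$, their graded commutator $\r{d}\circ\f{i}_\alpha-(-1)^{(1)(-1)}\f{i}_\alpha\circ\r{d}=\r{d}\circ\f{i}_\alpha+\f{i}_\alpha\circ\r{d}=\r{d}_\alpha$ is a graded-derivation of degree $0$, by the standard fact that the graded commutator of graded-derivations is again a graded-derivation. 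Now $\r{d}_\alpha(a)=\f{i}_\alpha(\r{d}a)+\r{d}(\f{i}_\alpha a)=\alpha(a)$ for $a\in A$, and, using $\r{d}^2=0$, $\r{d}_\alpha\circ\r{d}=\r{d}\circ\f{i}_\alpha\circ\r{d}=\r{d}\circ\r{d}_\alpha$, so $\r{d}_\alpha$ commutes with $\r{d}$. For uniqueness: any degree-$0$ graded-derivation $\delta$ commuting with $\r{d}$ and satisfying $\delta(a)=\alpha(a)$ must also satisfy $\delta(\r{d}a)=\r{d}(\delta a)=\r{d}(\alpha(a))$, so $\delta$ is prescribed on $S=A\cup\r{d}(A)$ and hence equals $\r{d}_\alpha$.

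Two identities remain. For $\r{d}_\alpha=(\f{i}_\alpha+\r{d})^2$ I would expand $(\f{i}_\alpha+\r{d})^2=\f{i}_\alpha^2+\f{i}_\alpha\r{d}+\r{d}\f{i}_\alpha+\r{d}^2$ and note $\r{d}^2=0$ (preceding theorem) and $\f{i}_\alpha^2=0$ --- the latter holds at once on the ambient algebra, since $(\f{i}_\alpha^2\omega)(\cdots)=\omega(\alpha\ot\alpha\ot\cdots)=0$ by alternation (equivalently, $\f{i}_\alpha^2=\tfrac12[\f{i}_\alpha,\f{i}_\alpha]$ is a graded-derivation of degree $-2$ vanishing on $S$). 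For $\f{i}_{[\alpha,\beta]}=\r{d}_\alpha\circ\f{i}_\beta-\f{i}_\beta\circ\r{d}_\alpha$, both sides are graded-derivations of $\Omega^*(\f{G})$ of degree $-1$ (the right-hand side is the graded commutator of a degree-$0$ and a degree-$(-1)$ derivation, and $[\alpha,\beta]\in\ov{D}$ so the left-hand side is defined), so it suffices to compare them on $S$: on $a\in A$ both vanish, while on $\r{d}a$ the left side gives $[\alpha,\beta](a)$ and the right side gives $\r{d}_\alpha(\beta(a))-\f{i}_\beta(\r{d}(\alpha(a)))=\alpha(\beta(a))-\beta(\alpha(a))$, which agree.

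The genuinely routine ingredients --- that contraction is an antiderivation for the chosen normalization of $\wedge$, and that graded commutators of graded-derivations are graded-derivations --- are classical and would be cited or stated as a remark, just as the original proof is ``omitted'' above. The one structural subtlety, and the step I expect to be the real obstacle, is that $\Omega^*(\f{G})$ is not free over its generators $A\cup\r{d}(A)$: there are relations (exactly the relations exploited in the proof of the preceding theorem), so one cannot naively ``define on generators and extend''. The plan sidesteps this by building $\f{i}_\alpha$ on the relation-free ambient algebra $\Lambda^*(\ov{D},A)$, building $\r{d}_\alpha$ by composing already-constructed operators, and invoking the generating property of $A\cup\r{d}(A)$ only for the uniqueness statements and the final identity-checks, where it causes no trouble.
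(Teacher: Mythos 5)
Your proof is correct and follows exactly the classical route that the paper alludes to when it omits the proof as ``similar to the classical case'': build $\f{i}_\alpha$ as contraction on the relation-free ambient algebra $\Lambda^*(\ov{D},A)$, restrict it to $\Omega^*(\f{G})$, define $\r{d}_\alpha$ by Cartan's formula, and settle uniqueness and the remaining identities by comparing graded-derivations on the generating set $A\cup\r{d}(A)$. Your observation that working on the ambient algebra sidesteps the well-definedness issue that the preceding theorem had to confront is exactly the right structural point.
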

%%%%%%%%%%%%%%%%%%%%%%%%%%%%%%%%%%%%%%%%%%%%%%%%%%%%%%%%%%%%%%%%%%%%%%%%%%%%%%%%%%%%%%%%%%%%%%%%%%%%%%%%%%%%%%%%%%%%%%%%%%
\subsection{de Rham cohomology}\label{2110300802-4}
The cohomology-algebra of $(\Omega^*(\f{G}),\r{d})$ is denoted by $$\r{H}_\r{dR}^*(\f{G})=\oplus_{n=0}^\infty\r{H}^n_\r{dR}(\f{G})$$
and called de Rham cohomology of $\f{G}$. Note that $\r{H}_\r{dR}^*(\f{G})$ is graded-commutative. For the classical geometry $\f{X}$,
$\r{H}_\r{dR}^*(\f{X})$ coincides with the usual de Rham cohomology of $X$.
\begin{theorem}\emph{For any morphism $\phi:\f{G}'\to\f{G}$ there exists a unique algebra morphism
$\phi:\Omega^*(\f{G})\to\Omega^*(\f{G}')$ of degree $0$ (i.e. $\phi(\Omega^n(\f{G}))\subseteq\Omega^n(\f{G}')$) which extends $\phi:A\to A'$
and commutes with the exterior-derivatives (i.e. $\phi\r{d}=\r{d}\phi$). Hence this extended morphism induces an algebra morphism
$$\r{H}_\r{dR}^*(\phi):\r{H}_\r{dR}^*(\f{G})\to\r{H}_\r{dR}^*(\f{G}').$$}\end{theorem}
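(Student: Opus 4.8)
The plan is to construct the extended map $\phi:\Omega^*(\f{G})\to\Omega^*(\f{G}')$ on generators and then verify it respects all relations. Recall that $\Omega^*(\f{G})$ is generated as an algebra by $A$ together with $\r{d}A$, and that a general $n$-form has the shape $\sum_j a_0^j(\r{d}a_1^j)\wedge\cdots\wedge(\r{d}a_n^j)$ as in (\ref{2111232101}). So the only consistent candidate is
$$\phi\Bigl(\sum_j a_0^j(\r{d}a_1^j)\wedge\cdots\wedge(\r{d}a_n^j)\Bigr):=\sum_j\phi(a_0^j)(\r{d}\phi(a_1^j))\wedge\cdots\wedge(\r{d}\phi(a_n^j)),$$
which forces uniqueness, degree-preservation, compatibility with the algebra structure, and $\phi\r{d}=\r{d}\phi$ essentially by construction. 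The real content is well-definedness: I must show that if a representative sum $\sum_j a_0^j(\r{d}a_1^j)\wedge\cdots\wedge(\r{d}a_n^j)$ vanishes in $\Omega^n(\f{G})$, then the corresponding sum with each $a_i^j$ replaced by $\phi(a_i^j)$ vanishes in $\Omega^n(\f{G}')$.

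First I would recall what "vanishes in $\Omega^n(\f{G})$" means concretely: $\Omega^n(\f{G})\subseteq\Lambda^n(\ov{D},A)=\r{Mod}(\ov{D}^{\ot_A n},A)$, so $\omega=0$ iff $\omega(\alpha_1\ot\cdots\ot\alpha_n)=0$ in $A$ for all $\alpha_1,\dots,\alpha_n\in\ov{D}$, and by multilinearity and (\ref{2111232048}) it suffices to test on tuples from $D$ itself. Explicitly, $\omega=0$ iff for all $\alpha_1,\dots,\alpha_n\in D$,
$$\sum_j a_0^j\det\bigl(\alpha_s(a_t^j)\bigr)_{s,t=1}^n=0\quad\text{in }A.$$
Now I use the morphism hypothesis: for each $\alpha_s\in D'$ (or rather in $\ov{D'}$, but testing on $D'$ suffices) there is $\beta_s\in\ov{D}$ with $\alpha_s(\phi(a))=\phi(\beta_s(a))$ for all $a\in A$. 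Applying $\phi$ to the displayed identity (evaluated at the tuple $\beta_1,\dots,\beta_n\in\ov{D}$) and using that $\phi$ is an algebra morphism commuting with these derivations in the stated sense gives
$$\sum_j\phi(a_0^j)\det\bigl(\phi(\beta_s(a_t^j))\bigr)_{s,t}=\sum_j\phi(a_0^j)\det\bigl(\alpha_s(\phi(a_t^j))\bigr)_{s,t}=0,$$
which is exactly the statement that $\phi(\omega)$ annihilates the arbitrary tuple $(\alpha_1,\dots,\alpha_n)$, hence $\phi(\omega)=0$. This handles well-definedness on each graded piece; additivity across degrees is immediate.

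The remaining verifications are routine: that $\phi$ so defined is multiplicative follows because the wedge product of the generating forms is computed by the same $\r{Alt}$-formula on both sides and $\phi$ is multiplicative on $A$; that it has degree $0$ and extends $\phi:A\to A'$ is visible from the formula on generators; and $\phi\r{d}=\r{d}\phi$ holds on generators $a_0(\r{d}a_1)\wedge\cdots\wedge(\r{d}a_n)\mapsto(\r{d}a_0)\wedge(\r{d}a_1)\wedge\cdots\wedge(\r{d}a_n)$ by comparing with the description of $\r{d}$ in the first theorem, then on all of $\Omega^*(\f{G})$ since both sides are derivations (resp. linear maps) agreeing on algebra generators. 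Finally, since $\phi$ is a cochain map between the differential graded-algebras $(\Omega^*(\f{G}),\r{d})$ and $(\Omega^*(\f{G}'),\r{d})$, it carries cocycles to cocycles and coboundaries to coboundaries, inducing $\r{H}_\r{dR}^*(\phi):\r{H}_\r{dR}^*(\f{G})\to\r{H}_\r{dR}^*(\f{G}')$; multiplicativity of this induced map is inherited from multiplicativity of $\phi$ on forms. I expect the main obstacle to be purely bookkeeping in the well-definedness step — specifically, being careful that the derivations $\beta_s\in\ov{D}$ supplied by the morphism axiom may depend on $\alpha_s$ but not on $j$, so that they can legitimately be pulled through the finite sum over $j$; once that is pinned down the determinant manipulation is formal.
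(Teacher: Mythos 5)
Your proposal is correct and follows the same route as the paper: the paper defines the extension by exactly the formula $\phi(a_0(\r{d}a_1)\wedge\cdots\wedge(\r{d}a_n)):=(\phi a_0)(\r{d}\phi a_1)\wedge\cdots\wedge(\r{d}\phi a_n)$ and simply asserts that well-definedness and the remaining properties ``can be checked.'' Your determinant argument --- pulling each $\alpha'_s\in\ov{D'}$ back to some $\beta_s\in\ov{D}$ via the morphism axiom (independently of $j$) and applying $\phi$ to the vanishing identity $\sum_j a_0^j\det(\beta_s(a_t^j))=0$ --- correctly supplies the well-definedness verification the paper leaves implicit.
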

\begin{proof} $\phi$ extends to differential forms by the formula $$\phi(a_0(\r{d}a_1)\wedge\cdots\wedge(\r{d}a_n)):=(\phi a_0)
(\r{d}\phi a_1)\wedge\cdots\wedge(\r{d}\phi a_n).$$
The well-definiteness and desired properties of this extended $\phi$ can be checked.\end{proof}
%%%%%%%%%%%%%%%%%%%%%%%%%%%%%%%%%%%%%%%%%%%%%%%%%%%%%%%%%%%%%%%%%%%%%%%%%%%%%%%%%%%%%%%%%%%%%%%%%%%%%%%%%%%%%%%%%%%%%%%%%%
\subsection{Geometries on sets}\label{2111180700}
Let $\s{S}$ be a set and $\f{G}=(A,D)$ a geometry. We say that \emph{$\s{S}$ has the geometry $\f{G}$} (or \emph{$\f{G}$ is a geometry on $\s{S}$})
if $A$ is an algebra of real-valued functions on $\s{S}$ with pointwise algebra operations.
(Thus the unit of $A$ is the constant function with value $1$.) We shall see in the following that
$A$ may be regarded as the \emph{algebra of smooth functions} on $\s{S}$ and $D$ as the \emph{Lie-algebra of vector fields} on $\s{S}$.

Let $\s{S}$ be a set having the geometry $\f{G}=(A,D)$. For any point $p\in\s{S}$ we let
$$\r{T}_p(\s{S}):=D/\{\alpha\in D:(\alpha a)(p)=0, \forall a\in A\}\cong\ov{D}/\{\alpha\in\ov{D}:(\alpha a)(p)=0, \forall a\in A\}$$
where $\cong$ denotes the isomorphism of vector spaces induced by the embedding $\alpha\mapsto\alpha$ from $D$ into $\ov{D}$.
We call the vector space $\r{T}_p(\s{S})$ the \emph{tangent space} to $\s{S}$ at $p$. The disjoint union set
$\r{T}(\s{S}):=\dot{\cup}_{p\in \s{S}}\r{T}_p(\s{S})$ is called the \emph{tangent bundle} of $\s{S}$. For any $\alpha\in\ov{D}$ the
\emph{geometrization} $\alpha^\dag$ of $\alpha$ is defined to be the mapping
\begin{equation*}\alpha^\dag:\s{S}\to\r{T}(\s{S})\hspace{10mm}p\mapsto[\alpha]_p\end{equation*} where $[\alpha]_p$ denotes the image of $\alpha$
in $\r{T}_p(\s{S})$. Any $\alpha^\dag$ is called a vector field on $\s{S}$. The set of all vector fields on $\s{S}$ is denoted by $\r{Vec}(\s{S})$.
For $\alpha\in D$ it is reasonable to call $\alpha^\dag$ a \emph{basic} vector field. Thus any vector field on $\s{S}$ is a combination
of the form $\sum_{i=1}^na_i\alpha_i^\dag$ where $a_i\in A$ and $\alpha_i^\dag$ is a basic vector field on $\s{S}$. Note that $\ov{D}$ is identified
with $\r{Vec}(\s{S})$ through the assignment $\alpha\mapsto\alpha^\dag$ and hence the Lie-algebra and the $A$-module
structures on $\ov{D}$ transform to the corresponding structures on $\r{Vec}(\s{S})$. We denote $\Omega^*(\f{G})$ by $\Omega^*(\s{S})$.
For any $\omega\in\Omega^1(\s{S})$ the \emph{geometrization} of $\omega$ is defined to be the
mapping with domain $\s{S}$ that assigns to any $p\in\s{S}$ the linear functional
$$\r{T}_p(\s{S})\to\bb{R}\hspace{10mm}[\alpha]_p\mapsto\big(\omega(\alpha)\big)(p).$$
Note that the above linear functional is well-defined: We know that $\omega=\sum_ib_i\r{d}a_i$ for some $a_i,b_i\in A$; thus if $[\alpha]_p=[\beta]_p$ then we have $$(\omega(\alpha)\big)(p)=\sum_ib_i(p)\big(\alpha(a_i))(p)=\sum_ib_i(p)\big(\beta(a_i))(p)=(\omega(\beta)\big)(p).$$
More generally, for any $\omega\in\Omega^n(\f{G})$ the geometrization of $\omega$ is defined to be the mapping that assigns to any $p\in \s{S}$
the well-defined alternating $n$-linear functional $$\r{T}_p(\s{S})\times\cdots\times\r{T}_p(\s{S})\to\bb{R}\hspace{10mm}
\big([\alpha_1]_p,\ldots,[\alpha_n]_p\big)\mapsto\big(\omega(\alpha_1\ot\cdots\ot\alpha_n)\big)(p).$$
Note that any $\omega\in\Omega^n(\s{S})$ is completely distinguished by its geometrization. It follows that if $D$ is finite dimensional with
$\r{dim}(D)=m$ then $\Omega^r(\s{S})=0$ for every $r>m$.

Let $\f{G}',\f{G}$  be geometries respectively on $\s{S}',\s{S}$. A mapping $f:\s{S}'\to \s{S}$ is called
\emph{algebraic} if for every $a\in A$ we have $a\circ f\in A'$. For the algebraic mapping $f$
the assignment $a\mapsto a\circ f$ defines an algebra morphism $f^\dag:A\to A'$ called \emph{algebrisation} of $f$.

An algebraic mapping $f:\s{S}'\to \s{S}$, as above, is called \emph{(weakly) differentiable} if $f^\dag$ is a (weak)
morphism from $\f{G}'$ to $\f{G}$. Such a $f$ induces the natural mapping
\begin{equation}\label{2112090821}\r{T}(\s{S}')\to\r{T}(\s{S})\hspace{10mm}[\alpha']_{p'}\mapsto[\alpha]_{f(p')}\end{equation}
where $\alpha'$ and $\alpha$ are related with each other as in (\ref{2112090807}). The bundle mapping (\ref{2112090821}) which is also fiberwise
linear may be regarded as the \emph{derivative} of $f$.

The classical geometry $\f{X}$ is obviously a geometry on the set $X$.
For any $p\in X$, $\r{T}_p({X})$ coincides with the usual tangent space to $X$ at $p$. Also for any ordinary vector field $v$ on $X$
the geometrization of $\r{d}_v$ coincides with $v$. If $f:X'\to X$ is a proper embedding of smooth manifolds then $f$
is differentiable as a mapping between sets having geometries.
%%%%%%%%%%%%%%%%%%%%%%%%%%%%%%%%%%%%%%%%%%%%%%%%%%%%%%%%%%%%%%%%%%%%%%%%%%%%%%%%%%%%%%%%%%%%%%%%%%%%%%%%%%%%%%%%%%%%%%%%%%
%%%%%%%%%%%%%%%%%%%%%%%%%%%%%%%%%%%%%%%%%%%%%%%%%%%%%%%%%%%%%%%%%%%%%%%%%%%%%%%%%%%%%%%%%%%%%%%%%%%%%%%%%%%%%%%%%%%%%%%%%%
%%%%%%%%%%%%%%%%%%%%%%%%%%%%%%%%%%%%%%%%%%%%%%%%%%%%%%%%%%%%%%%%%%%%%%%%%%%%%%%%%%%%%%%%%%%%%%%%%%%%%%%%%%%%%%%%%%%%%%%%%%
%%%%%%%%%%%%%%%%%%%%%%%%%%%%%%%%%%%%%%%%%%%%%%%%%%%%%%%%%%%%%%%%%%%%%%%%%%%%%%%%%%%%%%%%%%%%%%%%%%%%%%%%%%%%%%%%%%%%%%%%%%
\section{A General Framework for Lifted Geometry}\label{2110300803}
From now on $X$ is a fixed smooth finite dimensional manifold without boundary. We denote by $\r{Difeo}(X)$ the group of diffeomorphisms
of $X$. The flow $\bb{R}\times X\to X$ of any complete vector field $v\in\r{Vec}(X)$ is denoted by
$$(t,x)\mapsto \r{e}^{tv}(x)\hspace{10mm}(t\in\bb{R},x\in X).$$ Let $\s{U}$ be a set with a left group-action
$$\r{Difeo}(X)\times\s{U}\to\s{U}\hspace{10mm}(\theta,s)\mapsto\theta*s.$$ A subset ${\s{S}}\subseteq\s{U}$
is called \emph{almost $\r{Vec}_\r{c}(X)$-invariant} if for any $s\in {\s{S}}$ and $v\in\r{Vec}_\r{c}(X)$ there exists
$\epsilon>0$ such that $\r{e}^{tv}*s\in {\s{S}}$ for every $t$ with $|t|<\epsilon$. As example, for any $S\subseteq\s{U}$ the sets
$$\Big\{\theta*s:\theta\in\r{Difeo}(X),s\in S\Big\}$$
$$\Big\{(e^{t_1v_1}\cdots\r{e}^{t_nv_n})*s:n\geq1,t_1,\ldots,t_n\in\bb{R},v_1,\ldots,v_n\in\r{Vec}_\r{c}(X),s\in S\Big\}$$
are almost $\r{Vec}_\r{c}(X)$-invariant.
Let ${\s{S}}\subseteq\s{U}$ be almost $\r{Vec}_\r{c}(X)$-invariant and let $f:{\s{S}}\to\bb{R}$ be a function. The \emph{directional-derivative} $\tilde{\r{d}}_vf$ of $f$ is defined by $$(\tilde{\r{d}}_vf)(s):=\lim_{t\to0}\frac{f(\r{e}^{tv}*s)-f(s)}{t}\hspace{10mm}
(v\in\r{Vec}_\r{c}(X),s\in {\s{S}}).$$ The function $f$ is called \emph{$1$-differentiable} if for every $v\in\r{Vec}_\r{c}(X)$ and $s\in {\s{S}}$
the above limit exists. If $f$ is $1$-differentiable then it is \emph{continuous} in the following sense:
$$\lim_{t\to0}f(\r{e}^{tv}*s)=f(s)\hspace{10mm}(v\in\r{Vec}_\r{c}(X),s\in {\s{S}}).$$
The function $f$ is called \emph{$n$-differentiable} ($n\geq2$) if for every $v\in\r{Vec}_\r{c}(X)$ the function $\tilde{\r{d}}_vf$ exists
and is $(n-1)$-differentiable. $f$ is called \emph{smooth} if $f$ is $n$-differentiable for every $n$.
$f$ is called \emph{linear-derivable} if it is $1$-differentiable and the mapping $v\mapsto\tilde{\r{d}}_vf$
from $\r{Vec}_\r{c}(X)$ into the vector space of all functions on ${\s{S}}$, is linear. $f$ is called \emph{Lie-compatible} if it is
$2$-differentiable and the following identity holds:
$$(\tilde{\r{d}}_v\tilde{\r{d}}_w)(f)-(\tilde{\r{d}}_w\tilde{\r{d}}_v)(f)=\tilde{\r{d}}_{[v,w]}(f)\hspace{10mm}(v,w\in\r{Vec}_\r{c}(X)).$$
If $f$ is smooth, linear-derivable and Lie-compatible then for any $u\in\r{Vec}_\r{c}(X)$, $\tilde{\r{d}}_\r{u}f$ is smooth,
linear-derivable and Lie-compatible. The mentioned fact can be seen from the following observations ($r\in\bb{R}$):
\begin{equation*}\begin{split}\tilde{\r{d}}_{v+rw}(\tilde{\r{d}}_uf)&=\tilde{\r{d}}_u(\tilde{\r{d}}_{v+rw}f)+\tilde{\r{d}}_{[v+rw,u]}(f)\\
&=\tilde{\r{d}}_u(\tilde{\r{d}}_{v}f+r\tilde{\r{d}}_{w}f)+\tilde{\r{d}}_{[v,u]}(f)+r\tilde{\r{d}}_{[w,u]}(f)\\
&=(\tilde{\r{d}}_u\tilde{\r{d}}_{v})(f)+r(\tilde{\r{d}}_u\tilde{\r{d}}_{w})(f)+\tilde{\r{d}}_{[v,u]}(f)+r\tilde{\r{d}}_{[w,u]}(f)\\
&=\tilde{\r{d}}_v(\tilde{\r{d}}_{u}f)+\tilde{\r{d}}_{[u,v]}(f)+r\tilde{\r{d}}_w(\tilde{\r{d}}_{u}f)+r\tilde{\r{d}}_{[u,w]}(f)+
\tilde{\r{d}}_{[v,u]}(f)+r\tilde{\r{d}}_{[w,u]}(f)\\
&=\tilde{\r{d}}_v(\tilde{\r{d}}_{u}f)+r\tilde{\r{d}}_w(\tilde{\r{d}}_{u}f).\end{split}\end{equation*}
\begin{equation*}\begin{split}(\tilde{\r{d}}_v\tilde{\r{d}}_w)(\tilde{\r{d}}_uf)-(\tilde{\r{d}}_w\tilde{\r{d}}_v)(\tilde{\r{d}}_uf)
=&(\tilde{\r{d}}_v\tilde{\r{d}}_u\tilde{\r{d}}_w)(f)+(\tilde{\r{d}}_v\tilde{\r{d}}_{[w,u]})(f)\\
&-(\tilde{\r{d}}_w\tilde{\r{d}}_u\tilde{\r{d}}_v)(f)-(\tilde{\r{d}}_w\tilde{\r{d}}_{[v,u]})(f)\\
=&(\tilde{\r{d}}_u\tilde{\r{d}}_v\tilde{\r{d}}_w)(f)+(\tilde{\r{d}}_{[v,u]}\tilde{\r{d}}_{w})(f)+(\tilde{\r{d}}_v\tilde{\r{d}}_{[w,u]})(f)\\
&-(\tilde{\r{d}}_u\tilde{\r{d}}_w\tilde{\r{d}}_v)(f)-(\tilde{\r{d}}_{[w,u]}\tilde{\r{d}}_v)(f)-(\tilde{\r{d}}_w\tilde{\r{d}}_{[v,u]})(f)\\
=&(\tilde{\r{d}}_u\tilde{\r{d}}_{[v,w]})(f)+\tilde{\r{d}}_{[[v,u],w]}(f)+\tilde{\r{d}}_{[v[w,u]]}(f)\\
=&(\tilde{\r{d}}_{[v,w]}\tilde{\r{d}}_u)(f)+\tilde{\r{d}}_{[u,[v,w]]}(f)+\tilde{\r{d}}_{[[v,u],w]}(f)+\tilde{\r{d}}_{[v[w,u]]}(f)\\
=&\tilde{\r{d}}_{[v,w]}(\tilde{\r{d}}_uf).\end{split}\end{equation*}
Let $A$ be an algebra of real functions on ${\s{S}}$ with pointwise operations such that:
\begin{enumerate}\item[(C1)] any function $a$ in $A$, is smooth, linear-derivable and Lie-compatible, and
\item[(C2)] every of its directional-derivatives $\tilde{\r{d}}_va$ belongs to $A$.\end{enumerate}
Then for every $v\in\r{Vec}_\r{c}(X)$, the mapping $\tilde{\r{d}}_v:A\to A$ is a derivation and the set
\begin{equation}\label{2110240850}D=\Big\{\tilde{\r{d}}_v:v\in\r{Vec}_\r{c}(X)\Big\}\end{equation} is a sub-Lie-algebra of $\r{Der}(A)$.
Thus $\f{G}=(A,D)$ is a geometry on ${\s{S}}$. As it is clear from the definition of $\f{G}$ and the results of $\S$\ref{2111180700}
the tangent vectors to $\f{G}$ are obtained as a natural \emph{lifting} of vector fields on $X$. We call the geometry $\f{G}$ a
\emph{lifted geometry} on ${\s{S}}$ if the functions in $A$ are obtained
via a \emph{uniform} and \emph{distinguished} lifting procedure of smooth functions, differential forms,
or other smooth objects associated with $X$. (The meaning of the preceding sentence
will become more clear by the examples given in the following sections.) Then once we have a distinguished way to produce smooth functions on
${\s{S}}$, by (\ref{2111232048}) and  (\ref{2111232101}) we can also produce all vector fields and differential
forms on ${\s{S}}$. Thus the basic elements of any lifted geometry on ${\s{S}}$
have two properties: (i) They can be explicitly obtained from the basic elements of the geometry of $X$. (ii) To define them there
is no need to any local coordinate system or even topology on ${\s{S}}$.

It can be easily checked that if $f,g:{\s{S}}\to\bb{R}$ are smooth ($\r{resp.}$ linear-derivable, Lie-compatible) then the functions
$rf$ ($r\in\bb{R}$), $f+g$, and $fg$ are also smooth ($\r{resp.}$ linear-derivable, Lie-compatible).
This fact together with the above results imply that
the set $B$ of all smooth, linear-derivable and Lie-compatible functions on ${\s{S}}$ is an algebra satisfying (C1) and (C2). Thus for any
lifted geometry $\f{G}$ as above we have $A\subseteq B$. But note that the geometry $(B,D)$ on ${\s{S}}$ (with $D$ as in (\ref{2110240850}))
in general can not be considered as a lifted geometry on ${\s{S}}$ because we have no control on the nature of the functions in $B$.
If $\f{G}=(A,D)$ is a lifted geometry on ${\s{S}}$ and ${\s{S}}'\subset{\s{S}}$ is almost $\r{Vec}_\r{c}(X)$-invariant then
$\f{G}|_{{\s{S}}'}:=(A|_{{\s{S}}'},D)$ is a lifted geometry on ${\s{S}}'$ where $$A|_{{\s{S}}'}:=\Big\{a|_{{\s{S}}'}:a\in A\Big\}$$ and where $D$
similar to the above is the set of derivations of the form $\tilde{\r{d}}_v$ on ${A|_{{\s{S}}'}}$ for $v\in\r{Vec}_\r{c}(X)$. Then also the inclusion
${\s{S}}'\hookrightarrow {\s{S}}$ is differentiable. We may call $\f{G}|_{{\s{S}}'}$ a \emph{restricted} lifted geometry on ${\s{S}}'$.
\begin{remark}\label{2112060732}
\emph{All the above definitions and materials and almost all the results in $\S$\ref{2110300804}-\ref{2110300808} (with some appropriate changes)
remaind valid when $\r{Vec}_\r{c}(X)$ is replaced by an arbitrary Lie-algebra $\c{L}$ of complete vector fields on $X$. Thus we may consider
the notion of almost $\c{L}$-invariant subset ${\s{S}}\subseteq\s{U}$ and the notions of $\c{L}$-smooth, linear-$\c{L}$-derivable,
and Lie-$\c{L}$-compatible functions $f:{\s{S}}\to\bb{R}$. Accordingly, we may define a lifted $\c{L}$-geometry on ${\s{S}}$ to be a geometry
$(A,D_\c{L})$ where $A$ is an algebra of functions on ${\s{S}}$ obtained via a lifting procedure and satisfying the analogues
of (C1) and (C2), and where $D_\c{L}=\{\tilde{\r{d}}_v:v\in\c{L}\}$.}\end{remark}
%%%%%%%%%%%%%%%%%%%%%%%%%%%%%%%%%%%%%%%%%%%%%%%%%%%%%%%%%%%%%%%%%%%%%%%%%%%%%%%%%%%%%%%%%%%%%%%%%%%%%%%%%%%%%%%%%%%%%%%%%%
%%%%%%%%%%%%%%%%%%%%%%%%%%%%%%%%%%%%%%%%%%%%%%%%%%%%%%%%%%%%%%%%%%%%%%%%%%%%%%%%%%%%%%%%%%%%%%%%%%%%%%%%%%%%%%%%%%%%%%%%%%
%%%%%%%%%%%%%%%%%%%%%%%%%%%%%%%%%%%%%%%%%%%%%%%%%%%%%%%%%%%%%%%%%%%%%%%%%%%%%%%%%%%%%%%%%%%%%%%%%%%%%%%%%%%%%%%%%%%%%%%%%%
%%%%%%%%%%%%%%%%%%%%%%%%%%%%%%%%%%%%%%%%%%%%%%%%%%%%%%%%%%%%%%%%%%%%%%%%%%%%%%%%%%%%%%%%%%%%%%%%%%%%%%%%%%%%%%%%%%%%%%%%%%
\section{Lifted Geometry of Spaces of Radon Measures}\label{2110300804}
In this section we extend some aspects of differential geometry for configuration spaces considered in \cite{Albeverio1} and other papers.
Let $\s{M}_{X}$ denote the cone of positive Radon measures on $X$. For any $\mu\in\s{M}_{X}$ and every $\theta\in\r{Difeo}(X)$
let $\theta_*\mu$ denote the push-forward measure of $\mu$ under $\theta$ $\text{i.e.}$
$\theta_*\mu(U):=\mu(\theta^{-1}U)$ for any Borel subset $U$ of $X$.
Thus we have the group-action $$\r{Difeo}(X)\times\s{M}_{X}\to\s{M}_{X}\hspace{10mm}(\theta,\mu)\mapsto\theta_*\mu.$$
We are going to define a lifted geometry on $\s{M}_{X}$. Then also as we saw in $\S$\ref{2110300803} any almost $\r{Vec}_\r{c}(X)$-invariant subset
of $\s{M}_{X}$ has the restricted lifted geometry. Suppose $\phi_i\in\r{C}_\r{c}^\infty(X)$ for $i=1,\ldots,n$ and $\psi\in\r{C}^\infty(\bb{R}^n)$.
We let $$F=F[\psi:\phi_1,\ldots,\phi_n]\hspace{10mm}F:\s{M}_{X}\to\bb{R}$$ be defined by
\begin{equation}\label{2111130703}F(\mu):=\psi\big(\int_X\phi_1\r{d}\mu,\ldots,\int_X\phi_n\r{d}\mu\big).\end{equation}
For any $v\in\r{Vec}_\r{c}(X)$ we may compute $\tilde{\r{d}}_vF$ as follows: For a small $\epsilon>0$ let the function
$$\varphi=(\varphi_1,\ldots,\varphi_n)\hspace{10mm}\varphi:(-\epsilon,+\epsilon)\to\bb{R}^n$$ be defined by
$$\varphi(t):=(\int_X\phi_1\r{d}(\r{e}^{tv}_*\mu),\ldots,\int_X\phi_n\r{d}(\r{e}^{tv}_*\mu)\big)=
(\int_X(\phi_1\circ\r{e}^{tv})\r{d}\mu,\ldots,\int_X(\phi_n\circ\r{e}^{tv})\r{d}\mu\big).$$
We have $$(\tilde{\r{d}}_vF)(\mu)=(\psi\circ\varphi)'(0)=\sum_{i=1}^n\varphi_i'(0)\frac{\partial\psi}{\partial r_i}\big(\varphi(0)\big)
\hspace{5mm}\text{and}\hspace{5mm}\varphi_i'(0)=\int_X(\r{d}_v\phi_i)\r{d}\mu.$$ Thus if $\xi:\bb{R}^{2n}\to\bb{R}$ is defined by
\begin{equation}\label{2110300730}
(r_1,\ldots,r_n,s_1,\ldots,s_n)\mapsto\sum_{i=1}^ns_i\frac{\partial\psi}{\partial r_i}(r_1,\ldots,r_n),\end{equation} then we have
\begin{equation}\label{2110240841}
\tilde{\r{d}}_v\big(F[\psi:\phi_1,\ldots,\phi_n]\big)=F[\xi:\phi_1,\ldots,\phi_n,\r{d}_v\phi_1,\ldots,\r{d}_v\phi_n].\end{equation}
Applying (\ref{2110240841}) two times, for $F$ as above and $v,w\in\r{Vec}_\r{c}(X)$ we have
\begin{equation*}\begin{split}\Big[\big(\tilde{\r{d}}_w\tilde{\r{d}}_v\big)F\Big]\big(\mu\big)=&\hspace{2.5mm}
\sum_{j,i=1}^n\Big(\int_X(\r{d}_w\phi_j)\r{d}\mu\Big)\Big(\int_X(\r{d}_v\phi_i)\r{d}\mu\Big)
\Big[\frac{\partial\psi}{\partial r_j\partial r_i}\big(\int_X\phi_1\r{d}\mu,\ldots,\int_X\phi_n\r{d}\mu\big)\Big]\\
&+\sum_{k=1}^n\Big(\int_X(\r{d}_w\r{d}_v\phi_k)\r{d}\mu\Big)
\Big[\frac{\partial\psi}{\partial r_k}\big(\int_X\phi_1\r{d}\mu,\ldots,\int_X\phi_n\r{d}\mu\big)\Big].\end{split}\end{equation*}
Similarly $\big[(\tilde{\r{d}}_v\tilde{\r{d}}_w)F\big](\mu)$ may be computed explicitly, and then we find out that
\begin{equation*}\begin{split}
&\Big[\big(\tilde{\r{d}}_v\tilde{\r{d}}_w\big)F\Big]\big(\mu\big)-\Big[\big(\tilde{\r{d}}_w\tilde{\r{d}}_v\big)F\Big]\big(\mu\big)\\
=&\sum_{k=1}^n\Big(\int_X(\r{d}_v\r{d}_w\phi_k-\r{d}_w\r{d}_v\phi_k)\r{d}\mu\Big)
\Big[\frac{\partial\psi}{\partial r_k}\big(\int_X\phi_1\r{d}\mu,\ldots,\int_X\phi_n\r{d}\mu\big)\Big]\\
=&\sum_{k=1}^n\Big(\int_X(\r{d}_{[v,w]}\phi_k)\r{d}\mu\Big)
\Big[\frac{\partial\psi}{\partial r_k}\big(\int_X\phi_1\r{d}\mu,\ldots,\int_X\phi_n\r{d}\mu\big)\Big]\\
=&\Big[\tilde{\r{d}}_{[v,w]}F\Big]\big(\mu\big).\end{split}\end{equation*}
Thus we have showed that any function $F[\psi:\phi_1,\ldots,\phi_n]$ is Lie-compatible. We have
\begin{equation}\label{2110300712}\begin{split}
&F[\psi\bar{+}\psi':\phi_1,\ldots,\phi_n,\phi_1',\ldots,\phi'_{n'}]=F[\psi:\phi_1,\ldots,\phi_n]+F[\psi':\phi_1',\ldots,\phi'_{n'}]\\
&F[\psi\bar{\times}\psi':\phi_1,\ldots,\phi_n,\phi_1',\ldots,\phi'_{n'}]=\big(F[\psi:\phi_1,\ldots,\phi_n]\big)
\big(F[\psi':\phi_1',\ldots,\phi'_{n'}]\big)\end{split}\end{equation} where $\psi\bar{+}\psi',\psi\bar{\times}\psi'\in\r{C}^\infty(\bb{R}^{n+n'})$
are given respectively by $(r_1,\ldots,r_{n+n'})\mapsto$ $$\psi(r_1,\ldots,r_n)+\psi'(r_{n+1},\ldots,r_{n+n'})\hspace{2mm}\text{and}\hspace{2mm}
\psi(r_1,\ldots,r_n)\psi'(r_{n+1},\ldots,r_{n+n'}).$$ Let
$$A:=\Big\{F[\psi:\phi_1,\ldots,\phi_n]:\psi\in\r{C}^\infty(\bb{R}^n),\phi_1,\ldots,\phi_n\in\r{C}^\infty_\r{c}(X),n\geq1\Big\}.$$
It is concluded from (\ref{2110300712}) that $A$ is an algebra of functions on $\s{M}_{X}$. Also it follows from the formula
(\ref{2110240841}) that the functions in $A$ are smooth and linear-derivable. Thus the conditions (C1) and (C2) for $A$
are satisfied and we have the geometry $(A,D)$ on $\s{M}_{X}$ where $D$ is given by (\ref{2110240850}). The functions in
$A$ are obtained via the uniform and distinguished lifting procedure, given by the formula (\ref{2111130703}), of the smooth functions
on $X$. Thus $(A,D)$ may be regarded as a lifted geometry on $\s{M}_{X}$.

For any $\mu\in\s{M}_{X}$ let $\sim_\mu$ be the equivalence relation on $\r{Vec}_\r{c}(X)$ given by
$$\Big(v\sim_\mu w\Big)\Longleftrightarrow\Big(v(x)=w(x)\hspace{2mm}\text{for almost all}\hspace{1mm}x\in X\hspace{1mm}\text{w.r.t.}
\hspace{1mm}\mu\Big).$$ Then it can be checked that the assignment $v\mapsto\tilde{\r{d}}_v$ induces a vector-space isomorphism from
$\r{Vec}_\r{c}/\sim_\mu$ onto $\r{T}_\mu(\s{M}_{X})$.

The restricted lifted geometry on the following $\r{Difeo}(X)$-invariant subsets of $\s{M}_X$ could be considered:
(i) The subset $\s{M}^\r{f}_X$ of finite measures. (ii) The subset of measures $\mu$ with $\mu(X)\leq r$ for some
fixed number $r$. (iii) The subset of measures with values in $\bb{N}$. (iv) The configuration space of $X$ \cite{Albeverio0,Albeverio1,Albeverio2},
that is the subset of measures $\mu$ of the form $\sum_{x\in K}\delta_x$ where $K$ is a subset of $X$ without any limit point.
(Thus $K$ is countable.) (v) The subset of probability measures on $X$. (vi) The subset of measures on $X$ induced by Riemannian metrics on
$k$-dimensional submanifolds of $X$ for some fixed $k\leq\r{dim}(X)$. (vii) The subset $\s{M}^\r{c}_X$
of measures with compact supports. (viii) The subset of measures without any atom. (ix) The set of Radon measures which are absolutely continuous
$\text{w.r.t.}$ a measure induced by a Riemannian metric on $X$.

Let $\Upsilon:X'\to X$ be a proper embedding of a smooth manifold $X'$ into $X$. Consider the induced mapping
$\hat{\Upsilon}:\s{M}_{X'}\to\s{M}_{X}$ given by $\mu'\mapsto\Upsilon_*\mu'$. We have
$$\big(F[\psi:\phi_1,\ldots,\phi_n]\big)\circ\hat{\Upsilon}=F[\psi:\phi_1\circ\Upsilon,\ldots,\phi_n\circ\Upsilon].$$
Thus $\hat{\Upsilon}$ is algebraic. Also it follows from \cite[Lemma 5.34]{Lee1} that its algebrisation
is a surjective algebra morphism. We know that for every $v'\in\r{Vec}_\r{c}(X')$ there is $v\in\r{Vec}_\r{c}(X)$
that extends $v'$ i.e. $v'=v\circ\Upsilon$ where $\r{T}(X')$ is identified with a subset of $\r{T}(X)$. We have
\begin{equation*}\begin{split}
\tilde{\r{d}}_{v'}((F[\psi:\phi_1,\ldots,\phi_n])\circ\hat{\Upsilon})&=\tilde{\r{d}}_{v'}(F[\psi:\phi_1\circ\Upsilon,\ldots,\phi_n\circ\Upsilon])\\
&=F[\xi:\phi_1\circ\Upsilon,\ldots,\phi_n\circ\Upsilon,\r{d}_{v'}(\phi_1\circ\Upsilon),\ldots,\r{d}_{v'}(\phi_n\circ\Upsilon)]\\
&=F[\xi:\phi_1\circ\Upsilon,\ldots,\phi_n\circ\Upsilon,(\r{d}_{v}\phi_1)\circ\Upsilon,\ldots,(\r{d}_{v}\phi_n)\circ\Upsilon]\\
&=(F[\xi:\phi_1,\ldots,\phi_n,\r{d}_{v}\phi_1,\ldots,\r{d}_{v}\phi_n])\circ\hat{\Upsilon}\\
&=(\tilde{\r{d}}_vF[\psi:\phi_1,\ldots,\phi_n])\circ\hat{\Upsilon}.\end{split}\end{equation*}
Thus $\hat{\Upsilon}$ is differentiable.
We may regard $X\mapsto\s{M}_{X}$ as a functor from the category of manifolds and proper embeddings to the category of sets having geometries.

Suppose that $X$ has a Lie-group structure. For any measure $\nu\in\s{M}^\r{c}_{X})$ consider the convolution-mapping $\hat{\nu}$
given by $\mu\mapsto\mu\star\nu$ from $\s{M}^\r{f}_{X}$ into itself. We have $$\big(F[\psi:\phi_1,\ldots,\phi_n]\big)\circ\hat{\nu}=
F[\psi:(x\mapsto\int_X\phi_1(xy)\r{d}\nu(y)),\ldots,(x\mapsto\int_X\phi_n(xy)\r{d}\nu(y))].$$ Thus $\hat{\nu}$ is algebraic.
Similarly, $\mu\mapsto\nu\star\mu$ is algebraic.

For any $f\in\r{C}^\infty(X)$ let $\hat{f}:\s{M}_{X}\to\s{M}_{X}$ be defined by $\r{d}(\hat{f}\mu):=f\r{d}\mu$. The following identity shows that
$\hat{f}$ is algebraic: $$\big(F[\psi:\phi_1,\ldots,\phi_n]\big)\circ\hat{f}=F[\psi:f\phi_1,\ldots,f\phi_n].$$
%%%%%%%%%%%%%%%%%%%%%%%%%%%%%%%%%%%%%%%%%%%%%%%%%%%%%%%%%%%%%%%%%%%%%%%%%%%%%%%%%%%%%%%%%%%%%%%%%%%%%%%%%%%%%%%%%%%%%%%%%%
%%%%%%%%%%%%%%%%%%%%%%%%%%%%%%%%%%%%%%%%%%%%%%%%%%%%%%%%%%%%%%%%%%%%%%%%%%%%%%%%%%%%%%%%%%%%%%%%%%%%%%%%%%%%%%%%%%%%%%%%%%
%%%%%%%%%%%%%%%%%%%%%%%%%%%%%%%%%%%%%%%%%%%%%%%%%%%%%%%%%%%%%%%%%%%%%%%%%%%%%%%%%%%%%%%%%%%%%%%%%%%%%%%%%%%%%%%%%%%%%%%%%%
%%%%%%%%%%%%%%%%%%%%%%%%%%%%%%%%%%%%%%%%%%%%%%%%%%%%%%%%%%%%%%%%%%%%%%%%%%%%%%%%%%%%%%%%%%%%%%%%%%%%%%%%%%%%%%%%%%%%%%%%%%
\section{Gradient in Lifted Riemannian Geometry}\label{2110300804.5}
In this section we extend some contents considered in \cite{Albeverio4,Albeverio0,Albeverio1,Kondratiev1,Kuchling1,Ma1,Rockner1}.
Suppose that $X$ has a Riemannian metric $g$ and let $\s{M}_X$ and $A$ be as in $\S$\ref{2110300804}.
Let $\s{M}\subseteq\s{M}_X$ be an almost $\r{Vec}_\r{c}(X)$-invariant subset. For $\mu\in\s{M}_X$, we have the well-defined inner product
$$\sl[v]_\mu,[w]_\mu\sr_g:=\int_X\sl v(x),w(x)\sr_g\r{d}\mu(x)$$ on $\r{T}_\mu(\s{M})$. Hence we may regard $\s{M}$ as a Riemannian manifold.
For $F\in A$ the \emph{gradient} $\nabla F$ of $F$ is a vector field
on $\s{M}$ satisfying $$\big(\r{d}F(\mu)\big)[v]_\mu=\sl [v]_\mu,\nabla F(\mu)\sr_g\hspace{10mm}(\mu\in\s{M},v\in\r{Vec}_\r{c}(X)).$$
We prove that $\nabla F$ is actually a member of $\r{Vec}(\s{M})$:
Suppose that the function $F\in A$ be given by (\ref{2111130703}). For any fixed $\mu\in\s{M}$ we show that there is a
canonical vector field $w^\mu\in\r{Vec}_\r{c}(X)$ satisfying
\begin{equation}\label{2112170702}\big(\r{d}F(\mu)\big)[v]_\mu=\sl [v]_\mu,[w^\mu]_\mu\sr_g\hspace{10mm}(v\in\r{Vec}_\r{c}(X))\end{equation}
The more explicit form of equation (\ref{2112170702}) for every $v\in\r{Vec}_\r{c}(X)$ is
\begin{equation}\label{2112170703}\sum_{i=1}^nF_i(\mu)\int_{X}\big(\r{d}_v\phi_i\big)\r{d}\mu=\int_X\sl v(x),w^\mu(x)\sr_g\r{d}\mu(x)\end{equation}
where $F_i$ denotes the function $F[\frac{\partial\psi}{\partial r_i}:\phi_1,\ldots,\phi_n]$ in $A$.
Let $\c{O}$ be any open subset of $X$ which is identified with $\bb{R}^m$ ($\r{dim}(X)=m$) via a local coordinate mapping. Using the
identification $\c{O}\cong\bb{R}^m$ we may regard the \emph{restriction} of any object appearing in (\ref{2112170703}) as the corresponding object on
$\bb{R}^m$. Then for any $v\in\r{Vec}_\r{c}(X)$ with $\r{Supp}(v)\subset\c{O}$, (\ref{2112170703}) becomes
\begin{equation}\label{2112170704}\sum_{i=1}^nF_i(\mu)\int_{\bb{R}^m}\sum_{j=1}^mv_j\frac{\partial\phi_i}{\partial x_j}\r{d}\mu
=\int_{\bb{R}^m}\big(\sum_{j,k=1}^mv_jw^\mu_kg_{jk}\big)\r{d}\mu.\end{equation} It is important to note that since $\r{Supp}(v)\subset\c{O}$
the left (resp. right) hand sides of (\ref{2112170703}) and (\ref{2112170704}) are equal. Rearranging the sums in (\ref{2112170704}) we get
\begin{equation*}\sum_{j=1}^m\int_{\bb{R}^m}v_j\Big(\sum_{i=1}^nF_i(\mu)\frac{\partial\phi_i}{\partial x_j}\Big)\r{d}\mu=
\sum_{j=1}^m\int_{\bb{R}^m}v_j\Big(\sum_{k=1}^mw^\mu_kg_{jk}\Big)\r{d}\mu.\end{equation*}
Since $v$ is arbitrary it is concluded that for every $j=1,\ldots,m$ we must have
$$\sum_{i=1}^nF_i(\mu)\frac{\partial\phi_i}{\partial x_j}=\sum_{k=1}^mw^\mu_kg_{jk}\hspace{4mm}\text{almost every where w.r.t.}\hspace{1mm}\mu,$$
Hence for every $k=1,\ldots,m$ we must have
\begin{equation}\label{2112170705}
w^\mu_k=\sum_{j=1}^m\sum_{i=1}^ng^{-1}_{kj}F_i(\mu)\frac{\partial\phi_i}{\partial x_j}\hspace{4mm}\text{almost every where w.r.t.}\hspace{1mm}\mu.
\end{equation} For every $i=1,\ldots,n$ let $u^i=\nabla\phi_i$ denote the gradient of $\phi_i$ $\text{w.r.t.}$ $g$ on $X$.
$u^i\in\r{Vec}_\r{c}(X)$ and its components in a local coordinate system $\c{O}\cong\bb{R}^m$ as above is given by
\begin{equation}\label{2112170706}u^i_k:=\sum_{j=1}^mg^{-1}_{kj}\frac{\partial\phi_i}{\partial x_j}\hspace{10mm}(k=1,\ldots,m).\end{equation}
Now it is concluded from (\ref{2112170705}) and (\ref{2112170706}) that if we let $w^\mu$ to be defined by
$$w^\mu:=\sum_{i=1}^nF_i(\mu)u^i\in\r{Vec}_\r{c}(X)$$ then it satisfies in equation (\ref{2112170702}). Then also it is clear that
$$\nabla F=\sum_{i=1}^nF_i(u^i)^\dag\in\r{Vec}(\s{M}).$$ More explicitly we have
$$\nabla F[\psi:\phi_1,\ldots,\phi_n](\mu)=\sum_{i=1}^nF
\big[\frac{\partial\psi}{\partial r_i}:\phi_1,\ldots,\phi_n\big](\mu)\big[\nabla\phi_i\big]_\mu.$$
We endow $\s{M}$ with the \emph{weak topology} that is defined to be the smallest topology under which every function
$\s{M}\ni\mu\mapsto\int_X\phi\r{d}(\mu)$ for $\phi\in\r{C}^\infty_\r{c}(X)$ is continuous. It can be checked that the weak topology is Hausdorff.
Any function $F\in A$ may be regarded as a continuous function on $\s{M}$. Let $\Theta$ be a Borel probability measure on $\s{M}$.
Thus $\Theta$ may be regarded as a \emph{random radon measure} on $X$.
We are going to consider a construction of the \emph{formal} Laplace operator for $\s{M}$ $\text{w.r.t.}$ the pair $(g,\Theta)$, by means of its
associated quadratic form $\f{L}$ on $\r{L}^2(\Theta)$. Let $A_\r{c}\subset A$ be the subset of those functions of the form (\ref{2111130703})
with $\psi\in\r{C}^\infty_\r{c}(\bb{R}^n)$. Then $A_\r{c}$ is a subalgebra of bounded continuous functions on $\s{M}$ and hence
$A_\r{c}\subset\r{L}^2(\Theta)$. If $\s{M}$ is compact ($\text{e.g.}$ $X$ is compact and $\s{M}$ is the set of probability measures)
then $A_\r{c}$ is also dense in $\r{L}^2(\Theta)$. We let the symmetric positive-definite bilinear functional
$$\f{L}:A_\r{c}\times A_\r{c}\to\bb{R},$$ which may be called \emph{Dirichlet form} associated with $\Theta$,
be defined by $$\f{L}(F,F'):=\int_\s{M}\sl\nabla F(\mu),\nabla F'(\mu)\sr_g\r{d}\Theta(\mu).$$
More explicitly for $F=F[\psi,\phi_1,\ldots,\phi_n]$ and $F'=F[\psi',\phi'_1,\ldots,\phi'_{n'}]$ in $A_\r{c}$, $\f{L}(F,F')$ is the integral
of the following function of $\mu$ on $\s{M}$ $\text{w.r.t.}$ $\Theta$:
\begin{equation}\label{2112200744}\sum_{i=1}^n\sum_{i'=1}^{n'}F[\frac{\partial\psi}{\partial r_i}:\phi_1,\ldots,\phi_n](\mu)
F[\frac{\partial\psi'}{\partial r_{i'}}:\phi'_1,\ldots,\phi'_{n'}](\mu)\int_X\sl\nabla\phi_i,\nabla\phi'_{i'}\sr_g\r{d}\mu\end{equation}
Suppose that $\epsilon\in\r{C}^\infty(\bb{R})$. Then we have
\begin{equation}\label{2112200745}\epsilon\circ F[\psi,\phi_1,\ldots,\phi_n]=F[\epsilon\circ\psi,\phi_1,\ldots,\phi_n],\end{equation}
\begin{equation}\label{2112200746}F[\frac{\partial(\epsilon\circ\psi)}{\partial r_i}:\phi_1,\ldots,\phi_n](\mu)
=\Big(\frac{\partial\epsilon}{\partial t}\big(F[\psi,\phi_1,\ldots,\phi_n](\mu)\big)\Big)
F[\frac{\partial\psi}{\partial r_i}:\phi_1,\ldots,\phi_n](\mu).\end{equation}
Let $F=F[\psi,\phi_1,\ldots,\phi_n]$ be in $A_\r{c}$. Suppose that $\epsilon$ as above has the properties $$\epsilon(0)=0\hspace{4mm}\text{and}
\hspace{4mm}-1\leq\frac{\partial\epsilon}{\partial t}\leq1.$$ Since $\epsilon\circ\psi\in\r{C}^\infty_\r{c}(\bb{R}^n)$, (\ref{2112200745})
implies that $\epsilon\circ F\in A_\r{c}$. (\ref{2112200746}) shows that $\f{L}(\epsilon\circ F,\epsilon\circ F)$ is the integral
$\text{w.r.t.}$ $\Theta$ of the positive function given by (\ref{2112200744}) with $n'=n,\psi'=\psi,\phi'_i=\phi_i$, multiplied by the function
$$\mu\mapsto\Big(\frac{\partial\epsilon}{\partial t}\big(F(\mu)\big)\Big)^2.$$ Thus we have
$$\f{L}(\epsilon\circ F,\epsilon\circ F)\leq\f{L}(F,F)\hspace{10mm}(F\in A_\r{c}).$$ This implies that $\f{L}$ is a \emph{Markovian} form in
the sense of \cite{Fukushima1}.
%%%%%%%%%%%%%%%%%%%%%%%%%%%%%%%%%%%%%%%%%%%%%%%%%%%%%%%%%%%%%%%%%%%%%%%%%%%%%%%%%%%%%%%%%%%%%%%%%%%%%%%%%%%%%%%%%%%%%%%%%%
%%%%%%%%%%%%%%%%%%%%%%%%%%%%%%%%%%%%%%%%%%%%%%%%%%%%%%%%%%%%%%%%%%%%%%%%%%%%%%%%%%%%%%%%%%%%%%%%%%%%%%%%%%%%%%%%%%%%%%%%%%
%%%%%%%%%%%%%%%%%%%%%%%%%%%%%%%%%%%%%%%%%%%%%%%%%%%%%%%%%%%%%%%%%%%%%%%%%%%%%%%%%%%%%%%%%%%%%%%%%%%%%%%%%%%%%%%%%%%%%%%%%%
%%%%%%%%%%%%%%%%%%%%%%%%%%%%%%%%%%%%%%%%%%%%%%%%%%%%%%%%%%%%%%%%%%%%%%%%%%%%%%%%%%%%%%%%%%%%%%%%%%%%%%%%%%%%%%%%%%%%%%%%%%
\section{Lifted Geometry of Mapping Spaces}\label{2110300805}
Let $Y$ be a set with a $\sigma$-algebra $\Sigma$ of its subsets. Denote by $\s{F}^Y_X$ the set of all Borel measurable mappings from
$Y$ into $X$. We have the canonical group-action
$$\r{Difeo}(X)\times\s{F}^Y_X\to\s{F}^Y_X\hspace{10mm}(\theta,P)\mapsto\theta\circ P.$$
We are going to define a class of lifted geometries for $\s{F}^Y_X$.  For any $n$-tuple $(\mu_1,\ldots,\mu_n)$ of finite positive
measures on $(Y,\Sigma)$ and any function $\phi\in\r{C}^\infty_\r{c}(X^n)$ let the function $$F=F[\phi:\mu_1,\ldots,\mu_n]\hspace{10mm}
F:\s{F}^Y_X\to\bb{R}$$ be defined by $$F(P):=\int_{Y^n}\phi(P,\ldots,P)\r{d}(\mu_1\times\cdots\times\mu_n)\hspace{10mm}(P\in\s{F}^Y_X).$$
For any $v\in\r{Vec}_\r{c}(X)$ we have
\begin{equation*}\begin{split}(\tilde{\r{d}}_vF)(P)&=\lim_{t\to0}\frac{1}{t}
\int_{Y^n}\Big[\phi\big(\r{e}^{tv}(Py_1),\ldots,\r{e}^{tv}(Py_n)\big)-\phi\big(Py_1,\ldots,Py_n\big)\Big]\r{d}\mu_1(y_1)\cdots\r{d}\mu_n(y_n)\\
&=\int_{Y^n}\big(\r{d}_{v^{\oplus n}}\phi\big)\big(P,\ldots,P\big)\r{d}(\mu_1\times\cdots\times\mu_n)\end{split}\end{equation*}
where $v^{\oplus n}\in\r{Vec}_\r{c}(X^n)$ denotes the direct sum of $n$ copies of $v$. Thus we have
\begin{equation}\label{2110280706}\tilde{\r{d}}_v\big(F[\phi:\mu_1,\ldots,\mu_n]\big)=F[\r{d}_{v^{\oplus n}}\phi:\mu_1,\ldots,\mu_n].\end{equation}
For $v,w\in\r{Vec}_\r{c}(X)$ and $F$ as above by applying (\ref{2110280706}) we have that
\begin{equation}\label{2110280709}\begin{split}&(\tilde{\r{d}}_v\tilde{\r{d}}_wF)(P)-(\tilde{\r{d}}_w\tilde{\r{d}}_vF)(P)\\
=&\int_{Y^n}\big[(\r{d}_{v^{\oplus n}}\r{d}_{w^{\oplus n}}\phi)(P,\ldots,P)-(\r{d}_{w^{\oplus n}}\r{d}_{v^{\oplus n}}\phi)(P,\ldots,P)\big]
\r{d}(\mu_1\times\cdots\times\mu_n)\\=&\int_{Y^n}(\r{d}_{v^{\oplus n}}\r{d}_{w^{\oplus n}}\phi-\r{d}_{w^{\oplus n}}
\r{d}_{v^{\oplus n}}\phi)(P,\ldots,P)\r{d}(\mu_1\times\cdots\times\mu_n)\\=&\int_{Y^n}(\r{d}_{[v,w]^{\oplus n}}\phi)
(P,\ldots,P)\r{d}(\mu_1\times\cdots\times\mu_n)\\=&(\tilde{\r{d}}_{[v,w]}F)(P).\end{split}\end{equation}We have also the identities
\begin{equation}\label{2110280712}\begin{split}
&F[\phi\bar{+}\phi':\mu_1,\ldots,\mu_n,\mu_1',\ldots,\mu'_{n'}]=F[\phi:\mu_1,\ldots,\mu_n]+F[\phi':\mu_1',\ldots,\mu'_{n'}]\\
&F[\phi\bar{\times}\phi':\mu_1,\ldots,\mu_n,\mu_1',\ldots,\mu'_{n'}]=\big(F[\phi:\mu_1,\ldots,\mu_n]\big)\big(F[\phi':\mu_1',\ldots,\mu'_{n'}]\big)
\end{split}\end{equation} where $\phi\bar{+}\phi',\phi\bar{\times}\phi'\in\r{C}^\infty_\r{c}(X^{n+n'})$ are given respectively by
$(x_1,\ldots,x_{n+n'})\mapsto$ $$\phi(x_1,\ldots,x_n)+\phi'(x_{n+1},\ldots,x_{n+n'})\hspace{2mm}\text{and}\hspace{2mm}
\phi(x_1,\ldots,x_n)\phi'(x_{n+1},\ldots,x_{n+n'})$$ Let $\c{Y}$ be any nonempty family of finite positive measures on $(Y,\Sigma)$. Let
$$A:=\Big\{F[\phi:\mu_1,\ldots,\mu_n]:\phi\in\r{C}^\infty_\r{c}(X^n),\mu_1,\ldots,\mu_n\in\c{Y},n\in\bb{N}\Big\}.$$
It follows from (\ref{2110280706})-(\ref{2110280712}) that $A$ is an algebra of functions on $\s{F}^Y_X$ satisfying (C1) and (C2).
Thus we have defined a lifted geometry $(A,D)$ for $\s{F}^Y_X$ where $D$ is given by (\ref{2110240850}).
For any $P\in\s{F}^Y_X$ and every $v,w\in\r{Vec}_\r{c}(X)$ we write $v\sim_Pw$ if  the mappings $v\circ P$ and $w\circ P$ from $Y$ into the tangent
bundle of $X$ are almost every where equal w.r.t. every $\mu\in\c{Y}$. Then $\sim_P$ is an equivalence relation on $\r{Vec}_\r{c}(X)$
and it can be checked that the assignment $v\mapsto\tilde{\r{d}}_v$ induces a surjective vector space isomorphism
$\r{Vec}_\r{c}(X)/\sim_P\to\r{T}_P(\s{F}^Y_X)$.

For any proper embedding $\Upsilon:X'\to X$ consider the induced mapping $$\hat{\Upsilon}:\s{F}^Y_{X'}\to\s{F}^Y_X
\hspace{10mm}P'\mapsto\Upsilon\circ P'.$$ We have
$$\big(F[\phi:\mu_1,\ldots,\mu_n]\big)\circ\hat{\Upsilon}=F[\phi\circ\Upsilon^{\oplus^n}:\mu_1,\ldots,\mu_n].$$
If $\phi'\in\r{C}^\infty_\r{c}({X'}^n)$ there exists $\phi\in\r{C}^\infty_\r{c}(X^n)$ such that $\phi'=\phi\circ\Upsilon^{\oplus^n}$.
Thus the algebrisation of $\hat{\Upsilon}$ is surjective. If $v\in\r{Vec}_\r{c}(X)$ extends $v'\in\r{Vec}_\r{c}(X')$ then
\begin{equation*}\begin{split}\tilde{\r{d}}_{v'}\big((F[\phi:\mu_1,\ldots,\mu_n])\circ\hat{\Upsilon}\big)&=
F[\r{d}_{{v'}^{\oplus^n}}(\phi\circ\Upsilon^{\oplus^n}):\mu_1,\ldots,\mu_n]\\
&=F[\r{d}_{{v}^{\oplus^n}}\phi:\mu_1,\ldots,\mu_n]\circ\hat{\Upsilon}\\
&=\big(\tilde{\r{d}}_v(F[\phi:\mu_1,\ldots,\mu_n])\big)\circ\hat{\Upsilon}.\end{split}\end{equation*}
Thus $\hat{\Upsilon}$ is differentiable.

Let $\pi:(Y,\Sigma)\to(Y',\Sigma')$ be a measurable mapping and $\c{Y}'$ a set of finite positive measures on $Y'$ such that $\pi_*\mu\in\c{Y}'$
for every $\mu\in\c{Y}$. Consider the induced mapping $$\hat{\pi}:\s{F}^{Y'}_{X}\to\s{F}^Y_X\hspace{10mm}P'\mapsto P'\circ\pi.$$
We have $$\big(F[\phi:\mu_1,\ldots,\mu_n]\big)\circ\hat{\pi}=F[\phi:\pi_*\mu_1,\ldots,\pi_*\mu_n],$$ and hence $\hat{\pi}$ is differentiable.

The assignments $X\mapsto\s{F}^Y_X$ and $(Y,\Sigma,\c{Y})\mapsto\s{F}^Y_X$ may be regarded as (co)functors.

In case $Y$ is a smooth manifold the restricted lifted geometry of the $\r{Difeo}(X)$-invariant set $\r{C}^\infty(Y,X)$ of all smooth
mappings from $Y$ into $X$ can be considered. The geometry of $\r{C}^\infty(Y,X)$ as an infinite dimensional manifold locally modeled on appropriate
topological vector spaces has been considered by many authors.
%%%%%%%%%%%%%%%%%%%%%%%%%%%%%%%%%%%%%%%%%%%%%%%%%%%%%%%%%%%%%%%%%%%%%%%%%%%%%%%%%%%%%%%%%%%%%%%%%%%%%%%%%%%%%%%%%%%%%%%%%%
%%%%%%%%%%%%%%%%%%%%%%%%%%%%%%%%%%%%%%%%%%%%%%%%%%%%%%%%%%%%%%%%%%%%%%%%%%%%%%%%%%%%%%%%%%%%%%%%%%%%%%%%%%%%%%%%%%%%%%%%%%
%%%%%%%%%%%%%%%%%%%%%%%%%%%%%%%%%%%%%%%%%%%%%%%%%%%%%%%%%%%%%%%%%%%%%%%%%%%%%%%%%%%%%%%%%%%%%%%%%%%%%%%%%%%%%%%%%%%%%%%%%%
%%%%%%%%%%%%%%%%%%%%%%%%%%%%%%%%%%%%%%%%%%%%%%%%%%%%%%%%%%%%%%%%%%%%%%%%%%%%%%%%%%%%%%%%%%%%%%%%%%%%%%%%%%%%%%%%%%%%%%%%%%
\section{Lifted Geometry of Spaces of Submanifolds}\label{2110300806}
Let $\s{E}_X^{k}$ denote the set of all embedded oriented submanifolds of $X$ (with or without boundary) of the fixed dimension
${k}\leq\r{dim}(X)$. We have the obvious group-action $$\r{Difeo}(X)\times\s{E}_X^{k}\to\s{E}_X^{k}\hspace{10mm}(\theta,E)\mapsto\theta(E).$$
For any $n$-tuple $(\omega_1,\ldots,\omega_n)$ of ${k}$-differential forms $\omega_i\in\Omega_\r{c}^{k}(X)$ on $X$ with compact support and
any $\psi\in\r{C}^\infty(\bb{R}^n)$ we let the function $$F=F[\psi:\omega_1,\ldots,\omega_n]\hspace{10mm}F:\s{E}_X^{k}\to\bb{R}$$ be defined by
$$F(E):=\psi(\int_E\omega_1,\ldots,\int_E\omega_n).$$ Similar to $\S$\ref{2110300804} it can be shown that for any $v\in\r{Vec}_\r{c}(X)$ we have
\begin{equation}\label{2110310701}
\tilde{\r{d}}_v\big(F[\psi:\omega_1,\ldots,\omega_n]\big)=F[\xi:\omega_1,\ldots,\omega_n,\r{d}_v\omega_1,\ldots,\r{d}_v\omega_n]
\end{equation} where $\xi:\bb{R}^{2n}\to\bb{R}$ is defined by (\ref{2110300730}). Also it can be checked that the set $$A_{k}:=
\Big\{F[\psi:\omega_1,\ldots,\omega_n]:\psi\in\r{C}^\infty(\bb{R}^n),\omega_1,\ldots,\omega_n\in\Omega^{k}_\r{c}(X),n\geq1\Big\}$$
is an algebra of functions on ${\s{E}_X^{k}}$ satisfying (C1) and (C2). Thus $(A_{k},D)$
may be regarded a lifted geometry on $\s{E}_X^{k}$ where $D$ is given by (\ref{2110240850}).
For any $E\in\s{E}_X^{k}$ and every $v,w\in\r{Vec}_\r{c}(X)$ write $v\sim_Ew$ if $v|_E=w|_E$.
Then the vector spaces $\r{Vec}_\r{c}/\sim_E$ and $\r{T}_E(\s{E}_X^{k})$ are canonically isomorphic.

For any proper embedding $\Upsilon:X'\to X$ consider the mapping $\hat{\Upsilon}:\s{E}_{X'}^{k}\to\s{E}_X^{k}$ defined by $E'\mapsto\Upsilon(E')$.
We have $$\big(F[\psi:\omega_1,\ldots,\omega_n]\big)\circ\hat{\Upsilon}=F[\psi:\Upsilon^*\omega_1,\ldots,\Upsilon^*\omega_n].$$
It is well-know that any $\omega'\in\Omega^{k}_\r{c}(X')$ extends to some $\omega\in\Omega^{k}_\r{c}(X)$ i.e. $\omega'=\Upsilon^*\omega$.
Thus the algebrisation of $\hat{\Upsilon}$ is surjective. For $v\in\r{Vec}_\r{c}(X)$ that extends $v'\in\r{Vec}_\r{c}(X')$ we have
\begin{equation*}\begin{split}\tilde{\r{d}}_{v'}\big((F[\psi:\omega_1,\ldots,\omega_n])\circ\hat{\Upsilon}\big)&=
F[\xi:\Upsilon^*\omega_1,\ldots,\Upsilon^*\omega_n,\r{d}_{v'}(\Upsilon^*\omega_1),\ldots,\r{d}_{v'}(\Upsilon^*\omega_n)]\\
&=\big(\tilde{\r{d}}_v(F[\psi:\omega_1,\ldots,\omega_n])\big)\circ\hat{\Upsilon}.\end{split}\end{equation*}
Thus $\hat{\Upsilon}$ is differentiable.

Let $\s{E}_X^{k,\r{b}}\subset\s{E}_X^{k}$ denote the subset of submanifolds with nonempty boundary. Thus $\s{E}_X^{k,\r{b}}$ has
the restricted lifted geometry induced from $\s{E}_X^{k}$. Consider the boundary operator $$\partial:\s{E}_X^{k,\r{b}}\to\s{E}_X^{k-1}$$
that associates to any $E\in\s{E}_X^{k,\r{b}}$ its boundary $\partial E$. By Stokes' Theorem we have $$\big(F[\psi:\omega_1,\ldots,\omega_n]\big)
\big(\partial E)=\big(F[\psi:\r{d}\omega_1,\ldots,\r{d}\omega_n]\big)\big(E\big)\hspace{10mm}(E\in\s{E}_X^{k,\r{b}})$$ for every
$F[\psi:\omega_1,\ldots,\omega_n]$ in $A_{k-1}(X)$. Thus $\partial$ is algebraic. For $v\in\r{Vec}_\r{c}(X)$ we have
\begin{equation*}\begin{split}\tilde{\r{d}}_{v}\big((F[\psi:\omega_1,\ldots,\omega_n])\circ\partial\big)&=
\tilde{\r{d}}_{v}\big(F[\psi:\r{d}\omega_1,\ldots,\r{d}\omega_n]\big)\\
&=F[\xi:\r{d}\omega_1,\ldots,\r{d}\omega_n,\r{d}_{v}(\r{d}\omega_1),\ldots,\r{d}_{v}(\r{d}\omega_n)]\\
&=F[\xi:\r{d}\omega_1,\ldots,\r{d}\omega_n,\r{d}(\r{d}_v\omega_1),\ldots,\r{d}(\r{d}_v\omega_n)]\\
&=(F[\xi:\omega_1,\ldots,\omega_n,\r{d}_v\omega_1,\ldots,\r{d}_v\omega_n])\circ\partial\\
&=\big(\tilde{\r{d}}_{v}(F[\psi:\omega_1,\ldots,\omega_n])\big)\circ\partial.\end{split}\end{equation*}
Thus $\partial$ is weakly differentiable.
%%%%%%%%%%%%%%%%%%%%%%%%%%%%%%%%%%%%%%%%%%%%%%%%%%%%%%%%%%%%%%%%%%%%%%%%%%%%%%%%%%%%%%%%%%%%%%%%%%%%%%%%%%%%%%%%%%%%%%%%%%
%%%%%%%%%%%%%%%%%%%%%%%%%%%%%%%%%%%%%%%%%%%%%%%%%%%%%%%%%%%%%%%%%%%%%%%%%%%%%%%%%%%%%%%%%%%%%%%%%%%%%%%%%%%%%%%%%%%%%%%%%%
%%%%%%%%%%%%%%%%%%%%%%%%%%%%%%%%%%%%%%%%%%%%%%%%%%%%%%%%%%%%%%%%%%%%%%%%%%%%%%%%%%%%%%%%%%%%%%%%%%%%%%%%%%%%%%%%%%%%%%%%%%
%%%%%%%%%%%%%%%%%%%%%%%%%%%%%%%%%%%%%%%%%%%%%%%%%%%%%%%%%%%%%%%%%%%%%%%%%%%%%%%%%%%%%%%%%%%%%%%%%%%%%%%%%%%%%%%%%%%%%%%%%%
\section{Lifted Geometry of Spaces of Tilings}\label{2110300807}
By a \emph{tiling} on $X$ we mean a (countable) set $T$ of pairwise disjoint connected open subsets of $X$ satisfying the following three conditions:
\begin{enumerate}\item[(i)] The closure $\ov{U}$ of any $U\in T$ is compact.\item[(ii)] $X=\cup_{U\in T}\ov{U}$.
\item[(iii)] For every $x\in X$ there is an open set $V_x$ containing $x$ with $V_x\cap U\neq\emptyset$ only for a finite number
of members $U$ of $T$.\end{enumerate} We denote by ${\s{T}_X}$ the set of all tilings on $X$. We have the group-action given by
$$\r{Difeo}(X)\times{\s{T}_X}\to{\s{T}_X}\hspace{10mm}\theta*T:=\Big\{\theta(U):U\in T\Big\}.$$
Suppose that $X$ is oriented and $\r{dim}(X)=\r{k}$. We consider every open subset of $X$ as
an oriented submanifold. For any $T\in{\s{T}_X}$ and any open subset $V$ of $X$ we let
$$T|V:=\cup_{U\in T,U\cap V\neq\emptyset}U.$$ It can be checked that if $\ov{V}$ is compact then
\begin{enumerate}\item[(iv)] $\ov{T|V}$ is compact, and \item[(v)] for any $v\in\r{Vec}_\r{c}(X)$ there exists $\epsilon>0$ such that for
every $t$ with $|t|<\epsilon$ we have:$$(\r{e}^{tv}*T)|V=T|V.$$\end{enumerate}
For $n$-tuples $(V_1,\ldots,V_n)$ and $(\omega_1,\ldots,\omega_n)$ of open subsets $V_i$ of $X$ with $\ov{V_i}$ compact and
$\r{k}$-differential forms $\omega_i\in\Omega^\r{k}(X)$ on $X$, and any $\psi\in\r{C}^\infty(\bb{R}^n)$ we let the function
$$F=F[\psi:\omega_1|V_1,\ldots,\omega_n|V_n]\hspace{10mm}F:{\s{T}_X}\to\bb{R}$$ be defined by
$$F(T):=\psi(\int_{T|V_1}\omega_1,\ldots,\int_{T|V_n}\omega_n).$$ Using the above properties of $T|V$ and similar with
(\ref{2110310701}) for any $v\in\r{Vec}_\r{c}(X)$ we have
\begin{equation*}\tilde{\r{d}}_v\big(F[\psi:\omega_1|V_1,\ldots,\omega_n|V_n]\big)=
F[\xi:\omega_1|V_1,\ldots,\omega_n|V_n,\r{d}_v\omega_1|V_1,\ldots,\r{d}_v\omega_n|V_n]\end{equation*} where $\xi:\bb{R}^{2n}\to\bb{R}$ is
defined by (\ref{2110300730}). Also similar with the results of $\S$\ref{2110300806} it is proved that the set $A$ of all functions
on ${\s{T}_X}$ of the forms $F[\psi:\omega_1|V_1,\ldots,\omega_n|V_n]$ is an algebra satisfying (C1) and (C2). Thus we have the lifted geometry
$(A,D)$ on ${\s{T}_X}$ where $D$ is given by (\ref{2110240850}).
%%%%%%%%%%%%%%%%%%%%%%%%%%%%%%%%%%%%%%%%%%%%%%%%%%%%%%%%%%%%%%%%%%%%%%%%%%%%%%%%%%%%%%%%%%%%%%%%%%%%%%%%%%%%%%%%%%%%%%%%%%
%%%%%%%%%%%%%%%%%%%%%%%%%%%%%%%%%%%%%%%%%%%%%%%%%%%%%%%%%%%%%%%%%%%%%%%%%%%%%%%%%%%%%%%%%%%%%%%%%%%%%%%%%%%%%%%%%%%%%%%%%%
%%%%%%%%%%%%%%%%%%%%%%%%%%%%%%%%%%%%%%%%%%%%%%%%%%%%%%%%%%%%%%%%%%%%%%%%%%%%%%%%%%%%%%%%%%%%%%%%%%%%%%%%%%%%%%%%%%%%%%%%%%
%%%%%%%%%%%%%%%%%%%%%%%%%%%%%%%%%%%%%%%%%%%%%%%%%%%%%%%%%%%%%%%%%%%%%%%%%%%%%%%%%%%%%%%%%%%%%%%%%%%%%%%%%%%%%%%%%%%%%%%%%%
\section{Action Functionals as Functions of Lifted Geometry}\label{2110300808}
In this section we consider a variant of the lifted geometry described in $\S$\ref{2110300805}. Let ${\s{C}_X}$ denote the set of
all smooth curves $C$ in $X$ defined on an arbitrary compact interval in $\bb{R}$. There is a canonical group-action given by
$$\r{Difeo}(X)\times{\s{C}_X}\to{\s{C}_X}\hspace{10mm}C\mapsto\theta\circ C.$$ For Lagrangian densities $L_1,\ldots,L_n$ on $X$ i.e. smooth
functions $L_i$ on the tangent bundle $\r{T}X$ of $X$, and any $\psi\in\r{C}^\infty(\bb{R}^n)$, let the \emph{generalized action functional}
$$F=F[\psi:L_1,\ldots,L_n]\hspace{10mm}F:{\s{C}_X}\to\bb{R}$$ be defined by
$$F(C):=\psi\big(\int_a^bL_1(C,\dot{C}),\ldots,\int_a^bL_n(C,\dot{C})\big)\hspace{10mm}(C:[a,b]\to X).$$
We show that for any $v\in\r{Vec}_\r{c}(X)$ and any Lagrangian density $L$ the directional derivative $\tilde{\r{d}}_vF_0$ of the action functional
$F_0:C\mapsto\int_a^bL(C,\dot{C})$ is equal to the action functional associated to a Lagrangian density which is the directional derivative of
$L$ along a vector field $v^\dag$ on the tangent bundle $\r{T}X$. We give the proof only in the simple case that $X=\bb{R}^k$. But using the concept
of prolongation of vector fields on jet bundles it can be stated in the general case. So suppose $L:\bb{R}^k\times\bb{R}^k\to\bb{R}$ and
$v:\bb{R}^k\to\bb{R}^k$ are smooth and $C:[a,b]\to\bb{R}^k$ is a curve. Using the linear approximation $\r{e}^{tv}(x)\sim x+tv(x)$ we have
\begin{equation*}\begin{split}
\big(\tilde{\r{d}}_vF_0\big)(C)&=\int_a^b\lim_{t\to0}\frac{L\big((\r{e}^{tv}\circ C)(s),(\r{e}^{tv}\circ C)'(s)\big)-L\big(C(s),C'(s)\big)}{t}\r{d}s\\
&=\int_a^b\lim_{t\to0}\frac{L\big(C(s)+tv(C(s)),C'(s)+tv'(C(s))(C'(s))\big)-L\big(C(s),C'(s)\big)}{t}\r{d}s\\
&=\int_a^b\big(\r{d}_{v^\dag}L\big)\big(C(s),C'(s)\big)\r{d}s\end{split}\end{equation*}
where $v^\dag:\bb{R}^k\times\bb{R}^k\to\bb{R}^k\times\bb{R}^k$ is defined by $$v^\dag\big(x,y\big):=\big(v(x),v'(x)(y)\big).$$
(In the above $v'(x):\bb{R}^k\to\bb{R}^k$ is the derivative of $v$ at $x$ and $C'(s)\in\bb{R}^k$ is the tangent vector to $C$ at $C(s)$.)
It follows that $$\tilde{\r{d}}_v\big(F[\psi:L_1,\ldots,L_n]\big)=F[\xi:L_1,\ldots,L_n,\r{d}_{v^\dag}L_1,\ldots,\r{d}_{v^\dag}L_n]$$ where
$\xi:\bb{R}^{2n}\to\bb{R}$ is defined by (\ref{2110300730}). Then it can be checked that
$$A:=\Big\{F[\psi:L_1,\ldots,L_n]:\psi\in\r{C}^\infty(\bb{R}^n),L_1,\ldots,L_n\in\r{C}^\infty(\r{T}\bb{R}^k),n\geq1\Big\}$$
is an algebra of functions on $\s{C}_{\bb{R}^k}$ satisfying (C1) and (C2). Thus $(A,D)$ may be regarded as a lifted
geometry for $\s{C}_{\bb{R}^k}$ where $D$ is given by (\ref{2110240850}).
%%%%%%%%%%%%%%%%%%%%%%%%%%%%%%%%%%%%%%%%%%%%%%%%%%%%%%%%%%%%%%%%%%%%%%%%%%%%%%%%%%%%%%%%%%%%%%%%%%%%%%%%%%%%%%%%%%%%%%%%%%
%%%%%%%%%%%%%%%%%%%%%%%%%%%%%%%%%%%%%%%%%%%%%%%%%%%%%%%%%%%%%%%%%%%%%%%%%%%%%%%%%%%%%%%%%%%%%%%%%%%%%%%%%%%%%%%%%%%%%%%%%%

\vspace{5mm}

\textbf{Conclusion:}
We defined the concept of Lifted Geometry and gave various examples and elementary applications of it.
It was clear that because of independence of any lifted geometry for an object from the existence of any topology or local
coordinate system on the object, Lifted Geometry becomes a tool to define differentiable structures on geometric objects with
infinite dimensional nature. There are many aspects of Lifted Geometry that needs to be explored and we have plan to do it in future works.
In our opinion the three concepts of flow, symmetry, and critical points of functions and vector fields in Lifted Geometry must have interesting applications in Mathematical Mechanics.
%%%%%%%%%%%%%%%%%%%%%%%%%%%%%%%%%%%%%%%%%%%%%%%%%%%%%%%%%%%%%%%%%%%%%%%%%%%%%%%%%%%%%%%%%%%%%%%%%%%%%%%%%%%%%%%%%%%%%%%%%%
%%%%%%%%%%%%%%%%%%%%%%%%%%%%%%%%%%%%%%%%%%%%%%%%%%%%%%%%%%%%%%%%%%%%%%%%%%%%%%%%%%%%%%%%%%%%%%%%%%%%%%%%%%%%%%%%%%%%%%%%%%

%%%%%%%%%%%%%%%%%%%%%%%%%%%%%%%%%%%%%%%%%%%%%%%%%%%%%%%%%%%%%%%%%%%%%%%%%%%%%%%%%%%%%%%%%%%%%%%%%%%%%%%%%%%%%%%%%%%%%%%%%%
%%%%%%%%%%%%%%%%%%%%%%%%%%%%%%%%%%%%%%%%%%%%%%%%%%%%%%%%%%%%%%%%%%%%%%%%%%%%%%%%%%%%%%%%%%%%%%%%%%%%%%%%%%%%%%%%%%%%%%%%%%

{\footnotesize}
%%%%%%%%%%%%%%%%%%%%%%%%%%%%%%%%%%%%%%%%%%%%%%%%%%%%%%%%%%%%%%%%%%%%%%%%%%%%%%%%%%%%%%%%%%%%%%%%%%%%%%%%%%%%%%%%%%%%%%%%%%
%%%%%%%%%%%%%%%%%%%%%%%%%%%%%%%%%%%%%%%%%%%%%%%%%%%%%%%%%%%%%%%%%%%%%%%%%%%%%%%%%%%%%%%%%%%%%%%%%%%%%%%%%%%%%%%%%%%%%%%%%%
%%%%%%%%%%%%%%%%%%%%%%%%%%%%%%%%%%%%%%%%%%%%%%%%%%%%%%%%%%%%%%%%%%%%%%%%%%%%%%%%%%%%%%%%%%%%%%%%%%%%%%%%%%%%%%%%%%%%%%%%%%
%%%%%%%%%%%%%%%%%%%%%%%%%%%%%%%%%%%%%%%%%%%%%%%%%%%%%%%%%%%%%%%%%%%%%%%%%%%%%%%%%%%%%%%%%%%%%%%%%%%%%%%%%%%%%%%%%%%%%%%%%%
%%%%%%%%%%%%%%%%%%%%%%%%%%%%%%%%%%%%%%%%%%%%%%%%%%%%%%%%%%%%%%%%%%%%%%%%%%%%%%%%%%%%%%%%%%%%%%%%%%%%%%%%%%%%%%%%%%%%%%%%%%
\end{document}